\newcolumntype{?}{!{\vrule width 1pt}}
\newif\ifincludeprevious
\newtheorem{theorem}{Theorem}
\numberwithin{theorem}{section}
\newtheorem{proposition}[theorem]{Proposition}
\newtheorem{lemma}[theorem]{Lemma}
\newtheorem{corollary}[theorem]{Corollary}
\newtheorem{remark}[theorem]{Remark}
\newtheorem{example}[theorem]{Example}
\theoremstyle{definition}
\newtheorem{algorithm_thm}[theorem]{Algorithm}
\newcommand{\RR}{\mathbb{R}}
\newcommand{\CC}{\mathbb{C}}
\newcommand{\NN}{\mathbb{N}}
\DeclareMathOperator*{\conv}{conv}
\date{}
\title{Computing Convex Hulls of Trajectories}
\author{Daniel Ciripoi, Nidhi Kaihnsa, Andreas L\"ohne, and Bernd Sturmfels}
\begin{document}

 \begin{abstract}
 \noindent
 We study the convex hulls of trajectories of polynomial dynamical systems.
 Such trajectories include real algebraic curves. The boundaries of the resulting
 convex bodies are stratified into families of faces.
 We present numerical algorithms for identifying these patches.
 An implementation based on the software Bensolve Tools is given.
This furnishes a key step in computing
 attainable regions of chemical reaction networks.
 \end{abstract}
\maketitle

\section{Introduction}

Dynamics and convexity are ubiquitous in the mathematical sciences,
and they come together in applied questions in numerous ways. We 
explore an interaction of dynamics and convexity that  is motivated by
reaction systems in chemical engineering \cite{FH, MGHGM}. 
Consider an autonomous system of ordinary differential equations 
\begin{equation}
\label{eq:dynamics1}
\frac{d}{dt} x(t) \quad = \quad \phi \bigl(x (t) \bigr) ,
\end{equation}
where $x : \RR \rightarrow \RR^n$ is an unknown function, 
and $\phi : \RR^n \rightarrow \RR^n$ is a given polynomial map.
By the Picard-Lindel\"of Theorem, each 
 initial value problem for (\ref{eq:dynamics1}) has a unique solution on a local interval.
 Although $\phi$ is assumed to be polynomial, for most of the techniques 
 we develop it suffices for $\phi$ to be locally Lipschitz continuous.
  Any starting point $x(0)$ in $\RR^n$ gives rise to a unique trajectory
$\,\mathcal{C}:=\left\{ x(t) \mid t \in [0, a)  \text{ for } a >0  \right\} $. This curve may or may not converge to a stationary point,
and its dynamics can be chaotic. We consider the case where the trajectory $\mathcal{C}$ is bounded. If it is not bounded, we restrict time $t$ to a finite interval. 
 
 Our object of study is  the convex hull in
$\RR^n$ of the trajectory starting from~$t=0$:
\begin{equation} \label{eq:convtraj}
{\rm convtraj}\bigl( x(0) \bigr) \,\,\, := \,\,\,
\conv ( \mathcal{C} ).
\end{equation}
We call this set the {\em convex trajectory} of the point $x(0)$.
By definition, it is the smallest convex set containing the trajectory.
The convex trajectory need not be closed. In that case, we usually
replace ${\rm convtraj}\bigl( x(0) \bigr) $ by its topological closure,
so that the convex trajectory becomes a convex body, that is, a compact convex set with nonempty interior. We ensure full-dimensionality of the convex trajectory by restricting to the space in which this curve~lies. 

This article introduces numerical methods that solve the following problems
for the dynamical system (\ref{eq:dynamics1}) with respect to a given 
starting point $y~=~x(0)$~in~$\RR^n$.
\begin{itemize}
\item[(i)] Compute  $ \,{\rm convtraj}(y)$.  This is a convex body in $\RR^n$.
The output should be in a format that represents the boundary  as accurately as possible.
\item[(ii)] Decide whether  the convex trajectory $\,{\rm convtraj}(y)\,$ is forward closed.
This happens if and only if the vector field $\phi(z)$ at every boundary point $z$
 points inwards, for every 
supporting hyperplane.
\end{itemize}

A subset $S \subset \RR^n$ is {\em forward closed}  if every trajectory 
of the dynamical system (\ref{eq:dynamics1})
that starts in $S$ remains in~$S$. In the literature, forward closed sets are also referred to as forward invariant sets.
If ${\rm convtraj}(y)$ is forward closed then it equals the {\em attainable region}
of the point $y$ for  (\ref{eq:dynamics1}). The attainable region
 is the smallest subset of $\RR^n$ that contains
$y$ and is both convex and forward closed. We refer to
\cite{FH, Kai, MGHGM} for motivation and many details.

This article enlarges the repertoire of {\em convex algebraic geometry},
a field that studies convex semialgebraic sets, and their role in
polynomial optimization \cite{BPT}. Indeed, every algebraic curve is locally the trajectory
of a polynomial dynamical system. Hence, our results
apply to convex hulls of algebraic curves \cite{RS2}.
While Problem (ii) is specific to dynamical systems,
Problem (i) makes sense for any smooth curve that can be approximated sufficiently well. In particular, this is the case for parametric curves $\,t \mapsto x(t)$.
See Figures \ref{fig:trott}, \ref{fig:test2} and~\ref{fig:yellow_green_1}. 

Approximations by polygonal curves
are crucial for our approach. The boundary of the convex hull of a generic curve has a distinct structure that is characterized by families of polyhedral faces spanned by curve points. Our algorithms account for this by constructing polyhedral approximations of convex trajectories, using points that are sampled 
from the curve of interest.
We will discuss conditions on $\mathcal{C}$ and the sampling that assures the significance of our methods. More specifically, our
results establish a connection between the facial structure of the convex bodies we study and the facets of the approximating polytopes.

Our presentation is organized as follows.
Section~\ref{sec2} treats the planar case $(n=2)$.
Example \ref{ex:trott} illustrates our solution for the Hamiltonian system
associated with the Trott curve.
Section~\ref{sec3} develops a general geometric theory for
taking limits of  convex polytopes. This will be applied to
identify the desired convex body  (\ref{eq:convtraj})
from a sequence of inner approximations by polytopes.
We compute these polytopes
using the {\tt Matlab}/{\tt Octave} package {\tt Bensolve Tools}
\cite{CLW}. The optimization methodology that underlies this approach is explained in Section~\ref{sec4}. Section~\ref{sec5} describes a stratification of a piecewise smooth
 convex body of dimension $n$. The strata are $(n-k-1)$-dimensional
 families of $k$-dimensional faces, called {\em patches}, for various $k$.
 Section~\ref{sec6} concerns dynamical systems (\ref{eq:dynamics1}) whose
trajectories are algebraic or trigonometric curves.
This includes linear dynamical systems. Table~\ref{tab:3Ddata}
offers data on their patches for $n=3$.
Section~\ref{sec7} presents our solution to Problem (ii).
Algorithm \ref{alg:redgreen} decomposes each patch of the convex trajectory (\ref{eq:convtraj})  into two subsets,
depending whether the vector field $\phi(z)$ points inward or outward. If
the outward set is always empty then (\ref{eq:convtraj}) is forward closed.
In Section~\ref{sec8} we focus on  chemical reaction networks with
mass action kinetics \cite{HT, JS}.
These motivated our study.
We characterize planar algebraic curves that are trajectories of chemical reaction networks, we study the {\em van de Vusse system} 
\cite{FH, MGHGM}, and we exhibit a toric dynamical system \cite{CDSS} 
with ${\rm convtraj}(y)$ not forward closed. This resolves
  \cite[Conjecture~4.1]{Kai}.

\section{Planar Scenarios}
\label{sec2}

In this section we study the convex trajectories of dynamical systems
in the plane $(n=2)$. Our input is a pair of polynomials
$\phi = (\phi_1,\phi_2)$ along with a starting point $y = (y_1,y_2)$ in $\RR^2$.
The resulting trajectory of (\ref{eq:dynamics1}) is a 
smooth plane curve $\mathcal{C}$ that is parametrized by time~$t$.
We write $C = {\rm conv}(\mathcal{C})$ for the associated convex trajectory.
This is a convex region in $\RR^2$.

The boundary $\partial C$ of the convex trajectory $C$ consists of
arcs on the curve $\mathcal{C}$
and of edges that connect them.
Each edge of $C$ is a line segment between two points on $\mathcal{C}$.
These are either points of tangency or endpoints of the curve.
This partitions $\partial C$ into
{\em patches}, described in general in Section \ref{sec5}. Here,
 $k$-patches are edges (for $k=1$) and arcs (for $k=0$).

If we had an exact algebraic representation of the curve $\mathcal{C}$
then we could use symbolic methods to compute its bitangents
and derive from this a description of $\partial C$. 
For instance, if $\mathcal{C}$ is an algebraic curve  of degree four,
as in Example \ref{ex:trott}, then it has $28$ bitangent lines 
(over $\CC$) which can be computed using Gr\"obner bases. 
In \cite{BK} and \cite{EKKL}, methods for reducing the number of  
bitangent line computations  are introduced for convex hull computations in the planar case.
But, such algebraic representations are not available when we study
dynamical systems. Each 
trajectory is an analytic curve $\,t \mapsto x(t)$.
This parametrization is given indirectly, namely by
the differential equation (\ref{eq:dynamics1}) it satisfies. Furthermore, our aim is not only to obtain a representation for the convex hull of a curve but also to  characterize the structure of its boundary. We are not aware of any reports on such studies.

\begin{algorithm_thm}(Detection of edges and arcs for $n = 2$)

\label{alg:boundary_examining_2}
\begin{algorithm}[H]
\SetKwData{CommonVerts}{CommonVerts}
\SetKwFunction{Conv}{Conv} \SetKwFunction{Graph}{Graph} 
\SetKwFunction{Ben}{Bensolve} \SetKwFunction{AggregateShort}{AggregateShort}
\SetKwFunction{ConnectedComponents}{ConnectedComponents}
\SetKwInOut{Input}{input}\SetKwInOut{Output}{output}
\Input{A list $\mathcal{A}$ of points on a curve $\mathcal{C}$ in $\RR^2$;  a threshold value $\delta > 0$} 
\Output{The numbers $\#_0$ and $\#_1$ of arcs and edges of $C=\conv \left( \mathcal{C} \right)$ \\
For each $i$: \ list of curve points that represent the $i$th arc of $\partial C$. \\
List of line segments that represent the edges of $C$. 
}
Compute the vertices $\mathcal{V}$ and edges $\mathcal{H}$ of $A = \conv \left( \mathcal{A} \right)$. \\
Build a graph $G$ with node set  $\mathcal{H}$ such that two distinct edges $H_1,H_2$ of $A$ form an edge of $G$ if $H_1 \cap H_2 \neq \emptyset$ and both $H_1$ and $H_2$ have length $\leq \delta$.\\ 
Output the number $\#_1$ of isolated nodes of $G$ and the number $\#_0$ of remaining connected components $G_i$.\\ 
\ForEach{nonsingleton connected component $G_i$}{ Output a list of curve points that are endpoints of those edges of $A$, that belong to $G_i$. This represents the $i$th arc of $\partial C$.}
Edges $H_j$ of $A$ that correspond to isolated nodes of $G$ represent edges of $C$.\\
\end{algorithm}
\end{algorithm_thm}

We now assume that a polygonal approximation is given 
for the curve $\mathcal{C}$. Our input is a finite list $\mathcal{A}$ of 
points $x(t_i)$ on $\mathcal{C}$. In our computations we solve 
the differential equation (\ref{eq:dynamics1}) numerically
using the versatile solver {\tt ode45} in {\tt Matlab}.
This generates the set $\mathcal{A}$ of sample points 
which we assume to be reliably accurate. Using {\tt ode45} also allows us to control the quality of the approximation.
To ensure a certain precision of the approximation one could employ better methods for integrating dynamical systems like the one presented in \cite{Hoe}.

As a first step we address Problem (i) in the Introduction. 
Algorithm \ref{alg:boundary_examining_2} computes a representation of the boundary
$\partial C$ of the convex trajectory $C$ from a polygonal approximation $\mathcal{A}$ of the trajectory~$\mathcal{C}$. A key idea is the identification of long edges in $A := \conv\left(\mathcal{A}\right)$.
In Section \ref{sec5} we generalize to curves in $\RR^n$.
Algorithm~\ref{alg:boundary_examining_2} is a special case of Algorithm \ref{alg:boundary_examining_3}.

We next solve Problem (ii) from the Introduction. For each point $z$ on an arc of $C$,
the vector $\phi(z)$ is tangent to the curve, so there is nothing to be checked
at the arcs. To decide whether $C$ is forward closed with respect to the dynamics (\ref{eq:dynamics1}),
we must examine the edges of $C$. Suppose that ${\rm conv} \bigl( x(t_i),x(t_j) \bigr)$ is an edge of $C$,
and consider its relative interior points
\begin{equation}
\label{eq:part2a}
z \,\, =\,\, \lambda \cdot x(t_i) \,+\, (1-\lambda) \cdot x(t_j) \qquad {\rm where}\,\, \,\,0 < \lambda < 1 .
\end{equation}
The following $2 \times 2$ determinant is a polynomial in the parameter $\lambda$:
\begin{equation}
\label{eq:part2b}
f(\lambda) \quad =\quad {\rm det} \bigl( \,\phi(z) \,,\,x(t_i) - x(t_j)\, \bigr) . 
\end{equation}
We compute all real zeros of  the polynomial $f(\lambda)$ in the open interval $(0,1)$.
The zeros partition the  edge of $C$ into segments
where $\phi(z)$ points either inward or outward,
relative to  the convex region $C$. If there are no zeros
then the entire edge of $C$ is either inward pointing
or outward pointing.
In this manner we partition $\partial C$, and thereby solve (ii).
In order to compute the attainable region, we start new trajectories
from a sample of outward points.

\begin{figure}[h]
\begin{center}
 \includegraphics[width=0.6\textwidth]{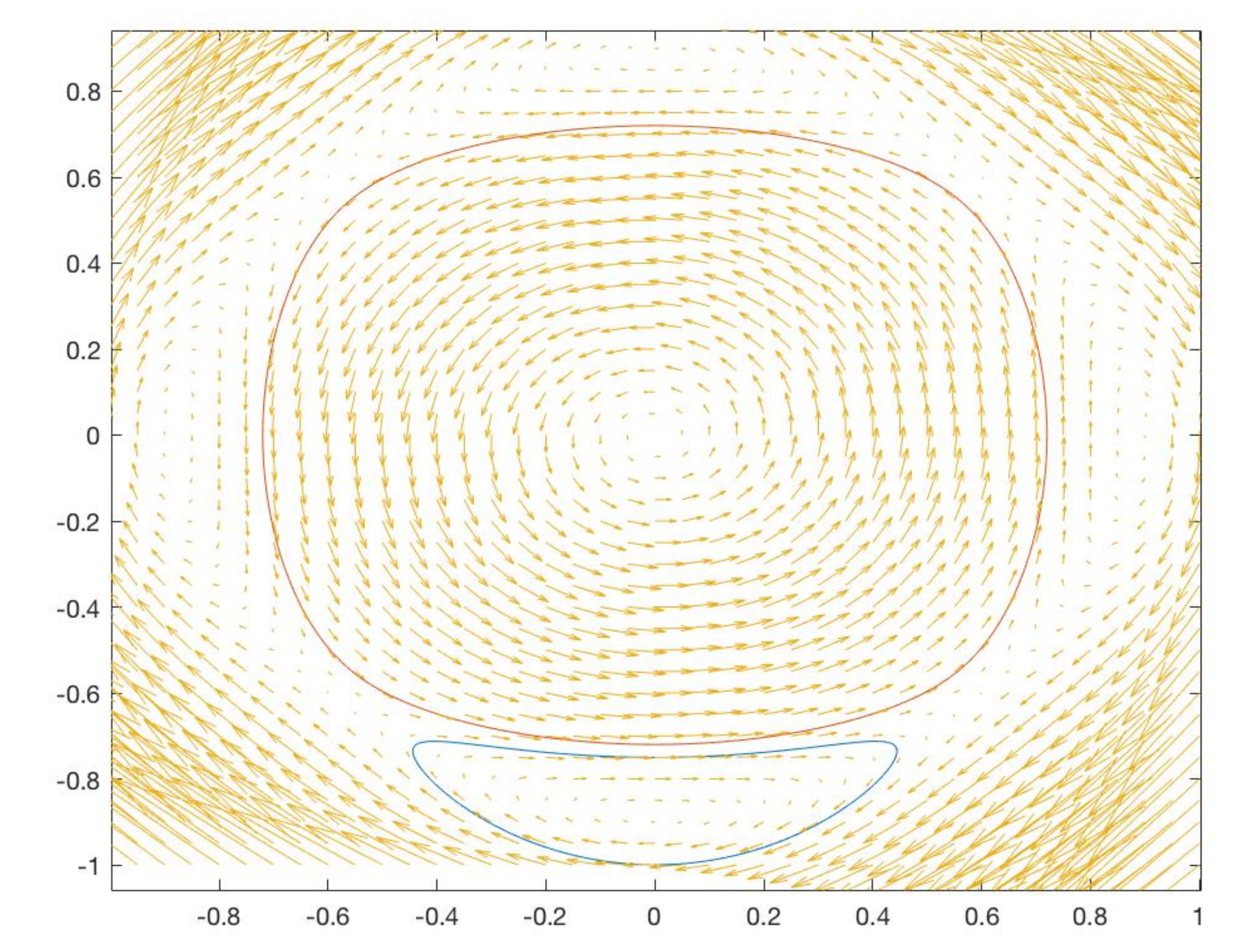}
 \caption{\label{fig:trott} The Hamiltonian vector field defined by the Trott curve
 and two of its trajectories.}
  \end{center}
 \end{figure}
 
 We tested our method on trajectories that are algebraic curves. 
Let $\mathcal{C}$ be the curve in $\RR^2$ defined by a polynomial equation $h(x,y) = 0$. The 
associated {\em Hamiltonian system} equals
\begin{equation}
\label{eq:hamiltonian}
 \dot x \, = \, \frac{\partial h}{\partial y} (x,y) \qquad {\rm and} \qquad
\dot y \, = \,- \,
\frac{\partial h}{\partial x} (x,y) . 
\end{equation}
Henceforth, we only consider Hamiltonian systems associated with polynomials. 
Thus $h$ is a polynomial in $x$ and $y$.
At any point that is not a critical point of $h$, the right hand side of (\ref{eq:hamiltonian})
is orthogonal to the  gradient vector of $h$. This means that the vector field is tangent to the
{\em level curves} $\,h(x,y) = c$, where $c $ ranges over $\RR$. From this we infer the following 
well-known result.

\begin{corollary} \label{cor:hamilton}
Every trajectory of (\ref{eq:hamiltonian}) is a piece of a level curve $\{h(x,y) = c\}$.
\end{corollary}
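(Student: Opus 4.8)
The plan is to show that $h$ is constant along any solution of the Hamiltonian system, which immediately places the trajectory inside a single level set $\{h(x,y)=c\}$.

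First I would fix a trajectory $t \mapsto (x(t),y(t))$ of (\ref{eq:hamiltonian}), defined on some interval $[0,a)$, and introduce the scalar function $g(t) := h\bigl(x(t),y(t)\bigr)$. By the chain rule,
\[
g'(t) \;=\; \frac{\partial h}{\partial x}\bigl(x(t),y(t)\bigr)\,\dot x(t) \;+\; \frac{\partial h}{\partial y}\bigl(x(t),y(t)\bigr)\,\dot y(t).
\]
Next I would substitute the right-hand sides of (\ref{eq:hamiltonian}), namely $\dot x = \partial h/\partial y$ and $\dot y = -\,\partial h/\partial x$, both evaluated at $(x(t),y(t))$. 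The two resulting terms are negatives of each other, so they cancel and $g'(t) \equiv 0$ on $[0,a)$. Since a function with vanishing derivative on an interval is constant, $g(t) = g(0) =: c$ for all $t$, which is precisely the assertion that the trajectory is contained in the level curve $\{h(x,y) = c\}$.

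There is essentially no obstacle here; the only points to keep honest are that $h$ is a polynomial, hence $C^1$, so the chain rule applies, and that the trajectory is $C^1$ by the Picard–Lindel\"of Theorem, so that $g$ is differentiable. Consistent with the discussion preceding the corollary, I would add the coordinate-free remark that the right-hand side of (\ref{eq:hamiltonian}) is the $90^\circ$ rotation of $\nabla h$ and hence orthogonal to $\nabla h$; the cancellation above is just the statement that $g'(t)$ is the inner product of $\nabla h$ with this orthogonal vector. This also explains why the result is classical and why only a brief justification is warranted.
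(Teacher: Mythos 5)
Your proof is correct and follows essentially the same route as the paper: the text preceding the corollary derives it from the orthogonality of the Hamiltonian vector field to $\nabla h$, and your chain-rule computation of $\frac{d}{dt}\,h\bigl(x(t),y(t)\bigr)=0$ is just the rigorous form of that observation, as you yourself note at the end.
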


We close this section by
illustrating Hamiltonian systems and our solutions to Problems (i) and (ii)
for $n=2$, for a curve that is familiar in computational algebraic geometry.

\begin{example}[$n=2$] \label{ex:trott}
\rm The {\em Trott curve} is the quartic in the plane $\RR^2$ defined by
$$ h(x,y) \,\, = \,\,144 (x^4+y^4)- 225(x^2+y^2)+350x^2y^2+81. $$
This curve consists of four nonconvex ovals. Hence it has $28$ real bitangents.

The vector field for the Hamiltonian system (\ref{eq:hamiltonian}) is shown in
Figure \ref{fig:trott}, along with two of its trajectories.
Fix any point $(u,v)$ in $\RR^2$ and set $c = h(u,v)$.
Then the trajectory of (\ref{eq:hamiltonian}) that starts at $(u,v)$
travels on the quartic curve defined by $h(x,y) = c$.
Consider the starting point $ (0,-1)$, which lies on the original Trott curve $h(x,y) = 0$.
Its trajectory is one of the four ovals, namely the oval at the bottom that is
red in Figure \ref{fig:test2} (left) and blue in Figure \ref{fig:trott}.

\begin{figure}[h]
\begin{center}
 \includegraphics[width=.3\linewidth]{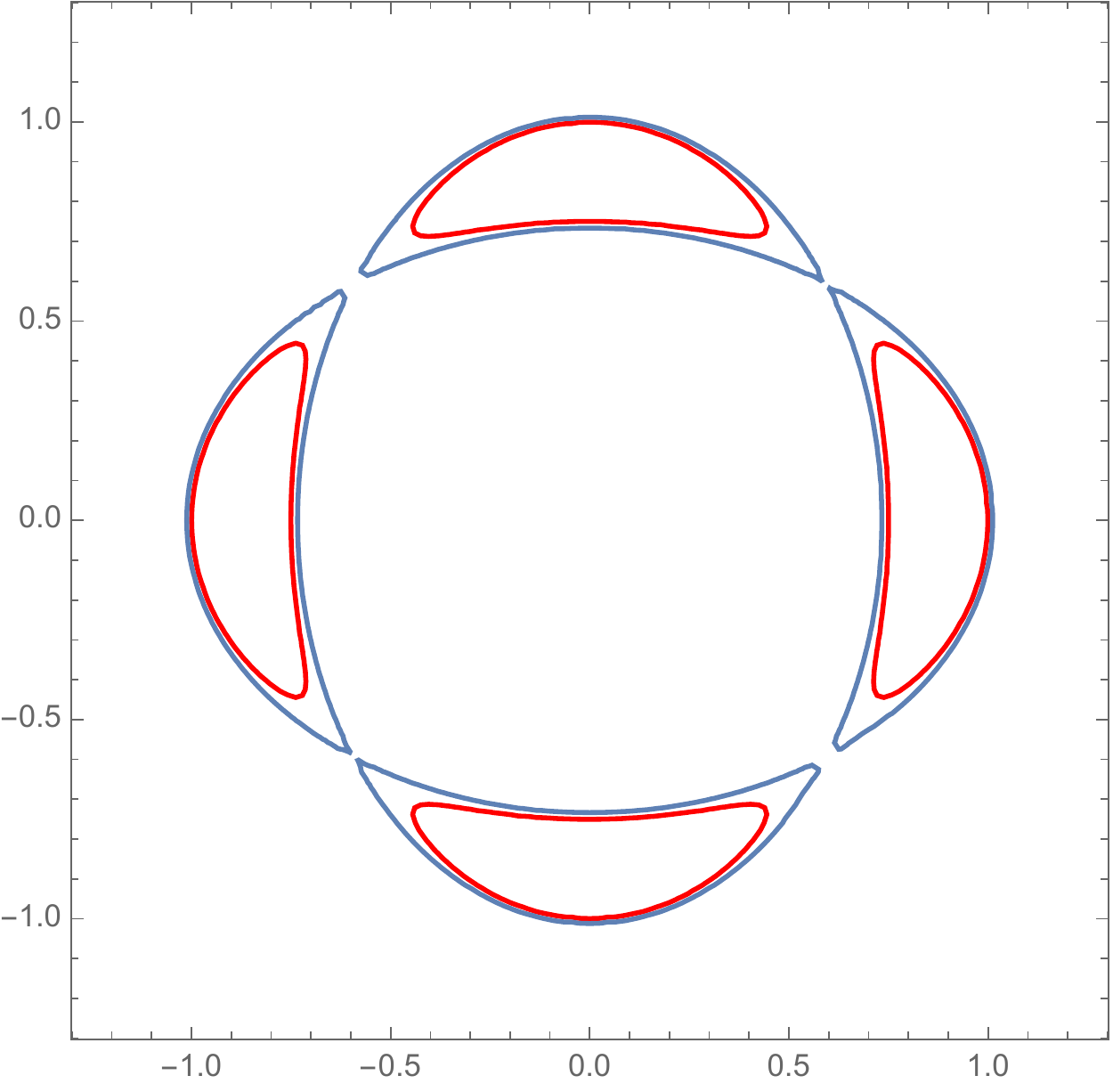} \qquad \qquad
 \includegraphics[width=.25\linewidth]{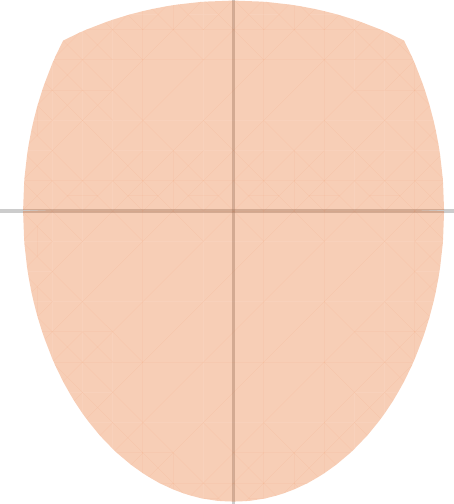}
\end{center} 
  \caption{A pair of ellipses encloses the Trott curve
  and bounds the attainable region.
  \label{fig:test2}}
\end{figure}

The region bounded by that oval is not convex.
Its convex hull is the convex trajectory. It has one 
bitangent edge, namely the segment
from $(-a,b)$ to $(a,b)$~where
$$ 
a =  0.4052937596229429488 \qquad {\rm and} \qquad
b = -0.7125251813139792270.
$$

This convex trajectory is not forward closed. 
This can be seen by taking any starting point $(x,b)$ where $0 < x < 0.239173943$.
 The resulting trajectory is a convex curve that lies above the
original oval. It is shown in red in Figure \ref{fig:trott}. 
The set of all trajectories
as $x$ ranges from $0$ to $a$ sweeps out the attainable region for $(0,-1)$.
This region is a semialgebraic set. Its boundary consists of parts of two ellipses.
Their union is the zero set of
$\,  h(x,y) - \frac{1053}{638} $.
The containment of the Trott curve in the ellipse is shown on the left in Figure~\ref{fig:test2}. The attainable region of
the lower oval in the Trott curve is the convex set of the right in Figure~\ref{fig:test2}. 
\end{example}

\section{Limiting Faces in Polyhedral Approximations}
\label{sec3}

We now study the limiting behavior of the facets of a sequence of polytopes. 
Each polytope is the convex hull 
of points sampled on a smooth, compact curve $\mathcal{C}$ in $\RR^n$ whose
convex hull $C = {\rm conv}(\mathcal{C})$ has dimension $n$. Our aim is to derive information about faces of
 $C$ from the facets of its polyhedral approximations. Even though there are lots of profound results on polytopal 
approximation of convex bodies, see e.g.\ \cite{Bro} for a survey, to the best of our knowledge there is
no study that gives results similar to the ones presented here. 
In addition to the approximation of $C$ in terms of Hausdorff distance, in the limit we also approach the  special facial structure of the convex hull $C$ of the curve.
Our approximation method, which is based on points sampled on that curve, accounts for this.  
We will elaborate on the structure of $\partial C$ in 
Section \ref{sec5}. Here we establish a theoretical basis for our algorithms. 

We wish to compute the boundary  of $C$ using a sequence
of inner approximations by convex polytopes, each obtained as the convex hull
of a path that approximates $\mathcal{C}$.
The {\em Hausdorff distance} of two compact sets $B_1$ and $B_2 $ in $ \RR^n$ is defined as
$$ d(B_1,B_2) \,\,=\,\,
\max\,\bigl\{\,\max_{x \in B_1} \min_{y \in B_2} \|x-y\| \,,\,\max_{y \in B_2} \min_{x \in B_1} \|x-y\| \,\bigr\}.$$
A sequence $\{B_{\nu}\}_{\nu \in \NN}$ of compact sets  is {\em Hausdorff convergent} to 
a fixed compact set $B$  if $\,d(B,B_\nu)\to 0\,$ for $\nu \to \infty$. 

A point $x$ of a compact convex set $B$ is {\em extremal} if it is not a proper convex combination of elements of $B$, that is, $\{x\}$ is a zero-dimensional face of $B$. Extremal points of a polytope
are called {\em vertices}. Even if $\{B_{\nu}\}_{\nu \in \NN}$ is a sequence of polytopes that Hausdorff converges to a polytope $B$, the limit of a convergent sequence of vertices $x_\nu$ of $B_\nu$ is not necessarily a vertex of $B$.
 For instance, consider $B_\nu = \conv(\{(0,0),(1,\frac{1}{\nu}),(2,0)\})$.  However, a converse holds.
 
\begin{lemma}\label{lem:0}
	Let $\{B_\nu\}_{\nu\in \NN} \rightarrow B$ be a Hausdorff convergent sequence of
	compact convex sets in $\RR^n$. For every extremal point $x$ of $B$ there exist
	extremal points $x_\nu$ of $B_\nu$ converging to $x$.
\end{lemma}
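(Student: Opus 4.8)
The plan is to argue by contradiction using the separating hyperplane that exposes $x$ as a vertex, and then exploit Hausdorff convergence to transfer the exposing functional to $B_\nu$ for large $\nu$. First I would note that it suffices to handle the case where $x$ is an \emph{exposed} point of $B$: by Straszewicz's theorem the exposed points are dense in the set of extremal points, so if every exposed point of $B$ is a limit of extremal points of $B_\nu$, then a diagonal argument (approximate $x$ by exposed points $x^{(k)}$, approximate each $x^{(k)}$ by extremal points of $B_\nu$ for $\nu$ large, and extract a diagonal subsequence) yields the claim for an arbitrary extremal point $x$. So assume there is a linear functional $c \in \RR^n$ with $\langle c, x\rangle > \langle c, y\rangle$ for all $y \in B \setminus \{x\}$.

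Next I would define $x_\nu$ to be a maximizer of $\langle c, \cdot\rangle$ over $B_\nu$; since $B_\nu$ is compact and convex, the face of maximizers is nonempty, and I may take $x_\nu$ to be an extremal point of that face, hence an extremal point of $B_\nu$ (a vertex, in the polytope case). It remains to show $x_\nu \to x$. Because $d(B,B_\nu)\to 0$, for each $\nu$ there is a point $y_\nu \in B_\nu$ with $\|y_\nu - x\|\le d(B,B_\nu)\to 0$, so $\langle c, x_\nu\rangle \ge \langle c, y_\nu\rangle \to \langle c, x\rangle$. Conversely, again by $d(B,B_\nu)\to 0$, there is $z_\nu \in B$ with $\|z_\nu - x_\nu\|\le d(B,B_\nu)\to 0$, so $\langle c, x_\nu\rangle \le \langle c, z_\nu\rangle + \|c\|\,d(B,B_\nu) \le \langle c, x\rangle + \|c\|\,d(B,B_\nu)$. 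Hence $\langle c, x_\nu\rangle \to \langle c, x\rangle$. Now suppose $x_\nu \not\to x$; then some subsequence stays outside a fixed ball $B(x,\varepsilon)$, and (since the $z_\nu$ above lie in the compact set $B$ and $\|z_\nu - x_\nu\|\to 0$) a further subsequence of the $z_\nu$ converges to some $z^\ast \in B$ with $\|z^\ast - x\|\ge \varepsilon$, so $z^\ast \ne x$; but $\langle c, z^\ast\rangle = \lim \langle c, z_\nu\rangle = \lim\langle c,x_\nu\rangle = \langle c, x\rangle$, contradicting that $x$ is the strict maximizer. Therefore $x_\nu \to x$.

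The main obstacle is the reduction from extremal to exposed points: a general extremal point need not be exposed (the classic ``stadium'' example), so the clean separating-functional argument does not apply to it directly. The remedy is Straszewicz's density theorem together with the diagonal extraction described above; one has to check that the diagonalization is compatible with the ``for every $\nu$'' quantifier in the conclusion, i.e.\ that one can choose indices $\nu_k \to \infty$ and extremal points of $B_{\nu_k}$ converging to $x$, and then fill in the remaining $\nu$ arbitrarily — which is fine since the statement only asks for \emph{existence} of a convergent sequence of extremal points, and one may pass to subsequences freely (a sequence converges to $x$ iff every subsequence has a sub-subsequence converging to $x$, so it is enough to produce the values along $\nu_k$ and define $x_\nu$ arbitrarily elsewhere, or alternatively interpolate). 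Everything else is a routine $\varepsilon$-estimate with the Hausdorff distance.
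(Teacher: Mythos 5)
Your proof is correct, but it follows a genuinely different route from the paper's. The paper argues directly with an arbitrary extremal point $x$: it takes \emph{any} points $x_\nu\in B_\nu$ converging to $x$, writes each as a convex combination of at most $n+1$ extremal points of $B_\nu$ (Carath\'eodory plus Minkowski's theorem), and shows by a compactness/contradiction argument that at least one of these extremal points must come within any prescribed $\varepsilon$ of $x$ for all large $\nu$ --- otherwise $x$ would be a proper convex combination of limit points lying in $B$ and bounded away from $x$, contradicting extremality. Your argument instead first settles the \emph{exposed} case by taking $x_\nu$ to be an extremal maximizer of the exposing functional over $B_\nu$ and showing the optimal values converge, and then upgrades to general extremal points via Straszewicz's density theorem and a diagonal extraction. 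Both are sound; the paper's proof is more self-contained (only Carath\'eodory and sequential compactness, no reduction step), while yours identifies the approximating extremal points more explicitly as maximizers of a concrete functional, at the cost of invoking Straszewicz. One small caution on your closing remark: defining $x_\nu$ ``arbitrarily'' off a sparse subsequence would \emph{not} yield a convergent sequence, so the quantifier really is discharged only by the full diagonal construction you describe earlier (for each exposed $x^{(k)}$ you get extremal points of $B_\nu$ for \emph{all} large $\nu$, and each $\nu$ in the $k$th block inherits one within $1/k+\|x^{(k)}-x\|$ of $x$); for the finitely many initial indices one picks any extremal point of $B_\nu$, which exists since $B_\nu$ is compact and convex. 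With that reading your argument is complete.
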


\begin{proof} Let $x$ be an extremal point of the limit body $B$. By Hausdorff convergence, there 
exists a sequence $\{x_\nu\}_{\nu \in \NN}$ with $x_\nu \in B_\nu$ that converges to $x$. 
By Carath\'eodory's Theorem, each $x_\nu$ is a convex combination of at 
 most $n+1$ extremal points $v_0^\nu,v_1^\nu,\dots,v_n^\nu$ of $B_\nu$. 
 
 Fix some $\varepsilon > 0$. Assume there is an infinite subset $N$ of $\NN$ such that
	$$\forall \nu \in N	\,\, \forall i \in \{0,1,\dots,n\}:\; \|x-v_i^\nu\| \,\geq \, \varepsilon.$$
By compactness, there is a subsequence $N'$ of $N$ such that, for each $i$,
the sequence $\{v_i^\nu\}_{\nu \in N'}$ converges to some $v_j$. 
Hence $x$ is a proper convex combination of $v_0,v_1,\dots,v_n \in B$. This contradicts 
$x$ being extremal in $B$. Therefore,
 for every $\varepsilon > 0$ there exists $\nu_0 \in \NN$ such that 
$$ \forall \nu \geq \nu_0 \,\; \exists i \in \{0,1,\dots,n\}:\; \|v_i^\nu - x\|\,< \, \varepsilon.$$
We thus obtain a sequence $\{x_\nu\}$ of extremal points 
$x_\nu$ of $B_\nu$ converging to $x$.
\end{proof}

An {\em $\varepsilon$-approximation} of the
given curve $\mathcal{C}$ is a finite subset $\mathcal{A}_\varepsilon \subset \mathcal{C}$ such that
$$ \forall y \in \mathcal{C}\,\, \exists x \in \mathcal{A}_{\varepsilon}: \|y-x\| \,\leq\, \varepsilon.$$
We consider a sequence $\{\mathcal{A}_{\varepsilon}\}_{\varepsilon {\scriptscriptstyle\searrow} 0}$ of $\varepsilon$-approximations, where $\varepsilon {\scriptscriptstyle\searrow} 0$ stands for a decreasing sequence $\{\varepsilon_\nu\}_{\nu \in \NN}$ of positive real numbers $\varepsilon_\nu$.
 The polytopes $A_\varepsilon=\conv(\mathcal{A}_{\varepsilon})$ can be described by their facets. Our goal is to study convergent sequences of facets $F_{\varepsilon}$ of $A_{\varepsilon}$ 
 in order to get information about the facial structure of the convex hull $C$ of the curve $\mathcal{C}$. 

\begin{proposition}\label{prop:1} Let $\{F_\varepsilon\}_{\varepsilon {\scriptscriptstyle\searrow} 0}$ be a Hausdorff convergent sequence of proper faces $F_\varepsilon$ of
the polytopes ${A}_\varepsilon$. Then its limit $F$ is contained in an exposed face of $C$.
\end{proposition}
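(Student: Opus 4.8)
The plan is to exploit the fact that each $F_\varepsilon$, being a proper face of the polytope $A_\varepsilon$, is an \emph{exposed} face: there is a supporting hyperplane $H_\varepsilon = \{z : \langle a_\varepsilon, z\rangle = b_\varepsilon\}$ with $A_\varepsilon \cap H_\varepsilon = F_\varepsilon$ and $\langle a_\varepsilon, z\rangle \leq b_\varepsilon$ for all $z \in A_\varepsilon$. Normalize so that $\|a_\varepsilon\| = 1$. First I would pass to a subsequence along which $a_\varepsilon \to a$ with $\|a\| = 1$ and $b_\varepsilon \to b$ (using compactness of the unit sphere, and boundedness of the $b_\varepsilon$, which follows since the $A_\varepsilon$ are all contained in a fixed bounded neighborhood of $C$). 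Since taking a subsequence does not change the Hausdorff limit $F$, it suffices to work along this subsequence.

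The key step is then to show that $H = \{z : \langle a, z\rangle = b\}$ is a supporting hyperplane of $C$ and that $F \subseteq C \cap H$. For the supporting property: every point $c \in C = \conv(\mathcal{C})$ is a limit of convex combinations of points of $\mathcal{C}$; each point of $\mathcal{C}$ is within $\varepsilon$ of a point of $\mathcal{A}_\varepsilon \subseteq A_\varepsilon$, hence within $\varepsilon$ of $C$ approximated from inside — more directly, $A_\varepsilon \to C$ in Hausdorff distance (the $\varepsilon$-approximation property gives $\mathcal{A}_\varepsilon$ dense in $\mathcal{C}$ up to $\varepsilon$, so $A_\varepsilon = \conv(\mathcal{A}_\varepsilon)$ converges to $\conv(\mathcal{C}) = C$). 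Thus for $c \in C$ pick $c_\varepsilon \in A_\varepsilon$ with $c_\varepsilon \to c$; then $\langle a_\varepsilon, c_\varepsilon\rangle \leq b_\varepsilon$, and passing to the limit gives $\langle a, c\rangle \leq b$. So $C$ lies in the closed halfspace $\{\langle a, \cdot\rangle \leq b\}$. For the face containment: any $x \in F$ is a limit of points $x_\varepsilon \in F_\varepsilon = A_\varepsilon \cap H_\varepsilon$, so $\langle a_\varepsilon, x_\varepsilon\rangle = b_\varepsilon$; passing to the limit yields $\langle a, x\rangle = b$, and $x \in C$ since $F \subseteq C$ (the $F_\varepsilon \subseteq A_\varepsilon$ Hausdorff converge into $C$). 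Hence $x \in C \cap H$, which is an exposed face of $C$ provided $H$ genuinely supports $C$, i.e.\ $C \cap H \neq C$ — this holds because $C$ is full-dimensional while $C \cap H$ lies in a hyperplane.

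The main obstacle I anticipate is the degenerate case in the limit: a priori the limiting halfspace could contain $C$ in its interior-facing side trivially, or $F$ could be empty, but the more delicate point is ensuring $C \cap H$ is a \emph{proper} exposed face rather than all of $C$. This is handled by full-dimensionality of $C$ (guaranteed in the setup of Section~\ref{sec3}), which forces any supporting hyperplane to meet $C$ in a lower-dimensional, hence proper, face. One should also check that $F$ is nonempty so the statement is not vacuous — but $F$ is a Hausdorff limit of nonempty compact sets, hence nonempty. A secondary subtlety is that passing to a subsequence is legitimate precisely because the conclusion ("$F$ is contained in an exposed face of $C$") is a statement about the fixed limit set $F$ and the fixed body $C$, independent of which subsequence of the $(a_\varepsilon, b_\varepsilon)$ we extracted; different subsequences might give different hyperplanes $H$, but each one exhibits $F$ inside some exposed face, which is all that is claimed.
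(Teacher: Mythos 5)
Your proposal is correct and follows essentially the same route as the paper: represent each $F_\varepsilon$ by a unit-normal supporting hyperplane, extract accumulation points $(a,b)$ of the hyperplane data, and show that the limiting hyperplane supports $C$ while containing $F$. The only cosmetic difference is that you obtain $F\subseteq C$ directly from $F_\varepsilon\subseteq A_\varepsilon\subseteq C$ with $C$ closed, whereas the paper routes this through Lemma~\ref{lem:0} (extremal points of $F$ as limits of vertices of $F_\varepsilon$, which lie on $\mathcal{C}$); both are valid.
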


\begin{proof}
We write the face $F_\varepsilon$ of the polytope $A_\varepsilon$ in the form
$\, F_\varepsilon = \{x \in {A}_\varepsilon \mid y_\varepsilon^T x = \gamma_\varepsilon\}$,
where  $\|y_\varepsilon\| = 1$ and
$y_\varepsilon^T x \leq \gamma_\varepsilon$ for  $x \in \mathcal{A}_\varepsilon$. 
 Since $\mathcal C$ is compact,
the sequence $\{\gamma_\varepsilon\}$ is bounded. Choose accumulation points $y$ and $\gamma$, respectively, of $\{y_\varepsilon\}_{\varepsilon>0}$ and $\{\gamma_\varepsilon\}_{\varepsilon>0}$. Since $y \neq 0$, $H:=\{x \in \RR^n\mid y^T x = \gamma\}$ is a hyperplane. By Lemma~\ref{lem:0}, any extremal point $x$ of $F$ is the limit of a sequence $\{x_\varepsilon\}_{\varepsilon {\scriptscriptstyle\searrow} 0}$ for $x_\varepsilon$ a vertex of $F_\varepsilon$. Every vertex of $F_\varepsilon$ belongs to $\mathcal{C}$. Thus $F$ is contained in $C \cap H$. 
It remains to show that $C$ is contained in the halfspace $H_-:=\{x \in \RR^n\mid y^T x \leq \gamma\}$. Assume there exists $x \in \mathcal{C}$ with $d:= y^T x - \gamma \,>\, 0$. Then there 
exists a sequence $\left\{  x_\varepsilon\right\}$
with $x_\varepsilon \in \mathcal{A}_\varepsilon$ converging to $x$ and such that $\{y_\varepsilon^T x_\varepsilon - \gamma_\varepsilon\}$ converges to $d$. This contradicts that the halfspace $\left\{ x \in \mathbb{R}^n  \mid y_\varepsilon^T x  \leq \gamma_\varepsilon \right\}$ contains $A_\varepsilon$.
\end{proof}

The limit $F$ in Proposition \ref{prop:1} may not be a face of $C$. This is shown in Figure~\ref{fig:img2} on
the left.
The following genericity assumptions on $\mathcal{C}$ will ensure that a Hausdorff convergent sequence of proper faces 
$F_\varepsilon$ of the polytopes $A_\varepsilon$ converges to a proper face $F$ of  the body~$C$:
\begin{itemize}
	\item[(H1)] Every point on the curve $\mathcal{C}$ that is in the boundary of $C$ is an extremal point of $C$. 
	\item[(H2)] Every polytope face of $C$ is a simplex.
	\item[(H3)] Intersecting the curve $\mathcal{C}$ with a hyperplane always results in a finite set.
\end{itemize}

\begin{figure}[h]
	\begin{center}
	\begin{tikzpicture}
	\filldraw [gray!20]  (-2,-1)-- (0,-1.25)-- (2,-1) -- (2.2,0)-- (1.6,0.75) --  (0,1.03) --(-1.6,0.75) --(-2.2,0)--cycle;
\draw [red, thick] plot [smooth cycle,tension=.8] coordinates {(-2,-1) (0,-1.25) (2,-1)  (2.2,0) (1.6,0.75)  (1,1) (0.5,0.25) (0,1.03) (-0.5,.25)  (-1,1) (-1.6,0.75) (-2.2,0)};
\draw [blue, ultra thick,line cap=round]  (1.15,1.03) -- (0,1.03) -- (-1.15,1.03);
\filldraw  (-2,-1) circle (2pt);
\filldraw  (0,-1.25) circle (2pt);
\filldraw  (2,-1)circle (2pt);
\filldraw  (2.2,0) circle (2pt);
\filldraw  (1.6,0.75)circle (2pt);
\filldraw  (0,1.03)  circle (2pt);
\filldraw  (-1.6,0.75) circle (2pt);
\filldraw  (-2.2,0) circle (2pt);
\filldraw  (0.5,.25) circle (2pt);
\filldraw  (-0.5,.25) circle (2pt);
\draw [above] (0.2,1.1) node {$F$};
\draw [below] (0,1) node {$y$};
\draw [left] (-2.4,-0.5) node {$\mathcal{C}$};
\draw [left] (1,-0.5) node {$A_\varepsilon$};

\end{tikzpicture} \qquad \qquad
	\begin{tikzpicture}
	\filldraw [gray!20]  (0,1) -- (-0.75,-0.5) -- (-1,-1.1)--(-0.5,-1.5) --(0,-1.25)--cycle;
\draw [red, thick] plot [smooth ,tension=.8] coordinates {(0,1) (-.15,0.5) (-.1,-0.25) (-.75,-0.5) (-1,-1.1) (-.5,-1.5) (0,-1.25) (-.35,-0.65) };
\filldraw  [blue] (0,1)circle (2pt);
\filldraw  (-0.75,-0.5)circle (2pt);
\filldraw  (-1,-1.1) circle (2pt);
\filldraw  (-0.5,-1.5)circle (2pt);
\filldraw  (0,-1.25)circle (2pt);
\filldraw  (-.15,0.5)  circle (2pt);
\filldraw  (-.1,-0.25)circle (2pt);
\filldraw  (-.35,-0.65) circle (2pt);
\draw [right] (0,1) node {$F$};
\draw [left] (-1.15,-0.5) node {$\mathcal{C}$};
\draw [left] (-0.25,-1) node {$A_\varepsilon$};
\end{tikzpicture} \quad
	\end{center}
\caption{A Hausdorff convergent sequence of facets $F_\varepsilon$ of $A_\varepsilon$ 
need  not  converge to a face $F$ of $C$. The face $F$ on the left contains a curve 
point $y \in \mathcal{C}$ which is not extremal in $C$. 
The endpoint $F$ of the curve $\mathcal{C}$ on the right is an exposed face of $C$ but it is not uniquely exposed.
There is no sequence of facets $F_\varepsilon$ of $A_\varepsilon$ that Hausdorff converges to that face~$F$.
}
\label{fig:img2}
\end{figure}
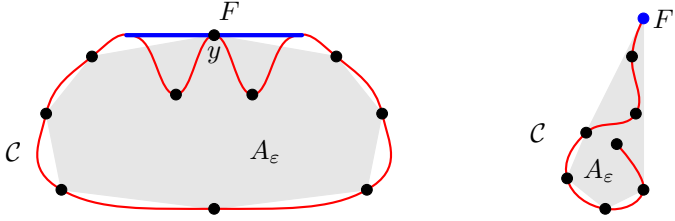

We now give a sufficient condition that proper faces of $C$ are polytopes.

\begin{proposition}\label{prop:2}
If $\,\mathcal{C}$ satisfies  {\rm (H3)}, then every proper face of $\,C$ is a polytope.
\end{proposition}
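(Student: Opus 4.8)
The plan is to take a proper face $F$ of $C$ and show it is the convex hull of finitely many points on $\mathcal{C}$. A proper face $F$ is exposed or at least lies inside an exposed face, so let me first reduce to the exposed case: if $F$ is a proper face of $C$, it is contained in some exposed face $C \cap H$ where $H = \{x \mid y^T x = \gamma\}$ is a supporting hyperplane, and $C \subseteq H_- = \{x \mid y^T x \le \gamma\}$. It suffices to show that $G := C \cap H$ is a polytope, since a face of a polytope is a polytope.

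The key step is to describe $G$ via the curve. Since $C = \conv(\mathcal{C})$ and $H$ is a supporting hyperplane, I claim $G = \conv(\mathcal{C} \cap H)$. The inclusion $\supseteq$ is clear. For $\subseteq$: any point $z \in G$ lies in $C = \conv(\mathcal{C})$, so by Carathéodory $z = \sum_i \lambda_i c_i$ with $c_i \in \mathcal{C}$, $\lambda_i > 0$, $\sum \lambda_i = 1$; since $y^T c_i \le \gamma$ for all $i$ and $\sum \lambda_i y^T c_i = y^T z = \gamma$, each $c_i$ must satisfy $y^T c_i = \gamma$, i.e. $c_i \in \mathcal{C} \cap H$. (A mild point: $\conv$ of a compact set is compact in finite dimensions, so no closure issue arises — $\mathcal{C}$ is assumed compact.) Now hypothesis (H3) gives that $\mathcal{C} \cap H$ is a finite set, hence $G = \conv(\mathcal{C} \cap H)$ is the convex hull of finitely many points, i.e.\ a polytope. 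Since $F$ is a face of the polytope $G$, $F$ is a polytope.

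I should double-check the one subtlety: that a \emph{proper} face of $C$ really is contained in a proper exposed face, i.e.\ in $C \cap H$ for a genuine supporting hyperplane $H$ with $H \ne \RR^n$ (equivalently $y \ne 0$). For a closed convex set, every proper face is contained in a proper exposed face (Straszewicz / standard convex geometry), and here $C$ is compact, so this is safe; alternatively one can invoke Proposition~\ref{prop:1}'s construction, but it is cleaner to just cite the standard fact. The only place genericity enters is (H3); (H1) and (H2) are not needed for this statement.

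The main obstacle, such as it is, is purely the bookkeeping that $G = \conv(\mathcal{C} \cap H)$ exactly — the ``forcing'' argument that every Carathéodory representative of a point on the supporting hyperplane must itself lie on that hyperplane. Everything else is standard: compactness of convex hulls of compact sets, faces of polytopes are polytopes, and the reduction to exposed faces. So the proof is short, and the work is in stating the supporting-hyperplane argument carefully.
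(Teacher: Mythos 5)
Your proof is correct, but it is organized differently from the paper's. You reduce to exposed faces (via the standard fact that every proper face of a compact convex set is contained in a proper exposed face --- a small quibble: this is not Straszewicz's theorem, which concerns density of exposed points among extreme points, but rather the supporting-hyperplane theorem applied at a relative interior point of $F$ together with the fact that a face of $C$ meeting the relative interior of $F$ contains $F$), then prove the identity $C \cap H = \conv(\mathcal{C}\cap H)$ by the Carath\'eodory forcing argument, and finally pass to $F$ as a face of a polytope. The paper's proof is shorter and avoids supporting hyperplanes entirely: a proper face $F$ of the full-dimensional body $C$ cannot affinely span $\RR^n$, hence lies in \emph{some} hyperplane, so $\mathcal{C}\cap F$ is finite by (H3); the extremal points of $F$ are extremal points of $C$, which necessarily lie on $\mathcal{C}$, so $F$ is a compact convex set with finitely many extremal points and is therefore a polytope by Minkowski's theorem. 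Your route buys the slightly stronger intermediate statement that each exposed face is exactly the convex hull of the curve points on its supporting hyperplane (which is in the spirit of Proposition~\ref{prop:1}); the paper's route is more economical, needing only an arbitrary hyperplane through $F$ rather than a supporting one, at the cost of invoking the extreme-point representation of compact convex sets.
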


\begin{proof}
	A proper face $F$ of $C$ belongs to some hyperplane. By (H3),
	 the set $\mathcal{C} \cap F$ is finite.
	  Since $F$ is a face of $C$, an extremal point of $F$ is also an extremal point of $C$.  
	  All extremal points of $C$ belong to $\mathcal{C}$, since they cannot be expressed as a proper convex combination of curve points. Thus, $F$ is a polytope as it has only finitely many extremal points.
\end{proof}

\begin{proposition}\label{prop:3} 
Suppose that {\rm (H1)}, {\rm (H2)} and {\rm (H3)} hold. 
Let $\{F_\varepsilon\}_{\varepsilon {\scriptscriptstyle\searrow} 0}$ be a Hausdorff convergent sequence of proper faces $F_\varepsilon$ of ${A}_\varepsilon$. Then its limit $F$ is a proper face of~$C$.
\end{proposition}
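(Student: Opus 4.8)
The plan is to build on Proposition~\ref{prop:1}, which already tells us that the limit $F$ of the sequence $\{F_\varepsilon\}$ lies inside some exposed face $G = C \cap H$ of $C$, where $H = \{x : y^Tx = \gamma\}$ is a supporting hyperplane obtained as an accumulation point of the supporting hyperplanes of the $F_\varepsilon$. What remains is to upgrade ``$F \subseteq G$'' to ``$F = G$ and $G$ is a proper face'' (properness of $G$ is immediate since $H$ is a supporting, not a trivial, hyperplane, and $C$ is full-dimensional). By Proposition~\ref{prop:2}, hypothesis (H3) guarantees that $G$ itself is a polytope, so by (H2) it is a simplex, say with vertices $w_0, w_1, \dots, w_m \in \mathcal{C}$. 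The goal is therefore to show each vertex $w_j$ of $G$ actually occurs as the limit of vertices of the $F_\varepsilon$, which forces $F \supseteq \conv\{w_0,\dots,w_m\} = G$.

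The main step is a dimension-and-barycentric-coordinates argument. First I would show that the $F_\varepsilon$ eventually all have the same number of vertices, equal to $m+1 = \dim G + 1$. For the upper bound: every vertex of $F_\varepsilon$ lies on $\mathcal{C}$, and the vertices of $F_\varepsilon$ converge (along the subsequence realizing the accumulation points $y,\gamma$) into $G \cap \mathcal{C}$; by (H3) this intersection is finite, in fact it is exactly the vertex set of the simplex $G$ by (H1) (any curve point in $\partial C$, hence in $G$, is extremal in $C$, hence a vertex of $G$). So for $\varepsilon$ small the vertices of $F_\varepsilon$ cluster near the finitely many points $w_0,\dots,w_m$; I would argue that each cluster contains at most one vertex of $F_\varepsilon$ for small $\varepsilon$ --- this is where one must rule out two vertices of $F_\varepsilon$ collapsing to the same $w_j$, which I would handle by noting that $F_\varepsilon$ is a polytope inscribed in $\mathcal{C}$ near a smooth point and using that $\mathcal{C}$ is a smooth curve so that two nearby sample points together with $w_j$ cannot all be vertices of a common face in the limit without violating extremality or smoothness. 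For the lower bound: $F_\varepsilon$ Hausdorff-converges to $F$, and $F \subseteq G$ which is $m$-dimensional; if $F_\varepsilon$ had fewer than $m+1$ vertices its limit would have affine dimension $< m$, but one also needs $F = G$ to conclude this, so the two bounds must be bootstrapped together. The cleaner route is: let $k = \liminf (\#\text{vertices of } F_\varepsilon) - 1$; pass to a subsequence where $F_\varepsilon$ has exactly $k+1$ vertices $v_0^\varepsilon, \dots, v_k^\varepsilon \to v_0, \dots, v_k$, all in $G \cap \mathcal{C} = \{w_0,\dots,w_m\}$, and with the $v_i$ distinct by the no-collapse claim; then $F = \conv\{v_0,\dots,v_k\}$ is a $k$-face of the simplex $G$, hence itself a simplex, hence a face of $C$ (a face of a face is a face).

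The last point is the real payoff and is short: $F = \conv\{v_0,\dots,v_k\}$ where each $v_i$ is a vertex of $G$, and any subset of the vertices of a simplex spans a face of that simplex; since $G$ is a face of $C$, $F$ is a face of $C$; and it is proper because it is contained in the proper face $G$. This completes the argument without ever needing to show $F = G$ --- that stronger statement is false in general (the right panel of Figure~\ref{fig:img2} is precisely the obstruction, which is why (H1) alone does not suffice and one genuinely uses that the limiting hyperplane exposes the simplex $G$).

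The hard part will be the no-collapse claim: ruling out that several vertices of $F_\varepsilon$ converge to a single vertex $w_j$ of $G$. I expect this is where (H2) (each polytopal face is a simplex) does the essential work, together with a local argument that near the smooth point $w_j \in \mathcal{C}$ the curve has nonvanishing curvature generically, so that for small $\varepsilon$ at most one of the sampled points near $w_j$ can be a vertex of the face $F_\varepsilon$ cut out by a hyperplane close to $H$; if two sampled points near $w_j$ were both vertices of $F_\varepsilon$, the chord between them would be a near-tangent edge of $A_\varepsilon$ at $w_j$, and in the limit this would produce an edge of $C$ through $w_j$ contained in $G$, contradicting either the simpliciality of $G$ (if it creates an extra vertex) or (H1) (if the chord's interior points, which lie near $\mathcal{C}$, end up in $\partial C$). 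I would make this precise by a compactness argument on the set of sample points within a fixed small ball around each $w_j$.
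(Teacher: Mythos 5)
Your endgame is the right one and is essentially the paper's: the extremal points of the limit $F$ lie on $\mathcal{C}$ and in $\partial C$, hence by (H1) they are extremal in $C$ and therefore vertices of the simplex face $G$ supplied by (H2), (H3) and Proposition~\ref{prop:2}; and the convex hull of vertices of a simplex face is again a face of $C$. The paper reaches this with less machinery: it takes $G$ to be the \emph{smallest} face of $C$ containing $F$ (so that ``$F$ is a proper subsimplex of $G$'' contradicts minimality and yields $F=G$), and it obtains ``the extremal points of $F$ lie on $\mathcal{C}$'' in one line by applying Lemma~\ref{lem:0} to the convergent sequence $F_\varepsilon \to F$, rather than by analyzing the full vertex sets of the $F_\varepsilon$.

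The genuine problem with your write-up is the ``no-collapse claim,'' which you correctly flag as the hard part but leave unproven: the sketch via ``nonvanishing curvature generically'' and ``near-tangent edges'' appeals to properties that are not among the hypotheses (H1)--(H3) and would not be straightforward to establish. Fortunately the claim is not needed at all. If several vertex sequences of $F_\varepsilon$ collapse to the same $w_j$, you still get $F=\conv(S)$ for $S$ a subset of the vertex set of the simplex $G$, and any such subset (distinct or not) spans a face of $G$, hence a face of $C$; your own final paragraph already makes exactly this observation, so distinctness buys you nothing. Similarly, the bound on the number of vertices of $F_\varepsilon$ and the bootstrapping of the two dimension bounds can be discarded: to know that $F$ is the convex hull of accumulation points of vertices, it suffices to note that every extremal point of $F$ is such an accumulation point (Lemma~\ref{lem:0}) and that a compact convex set is the convex hull of its extremal points. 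Excising the detour leaves a correct proof that coincides with the paper's.
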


\begin{proof} By Proposition \ref{prop:1}, $F$ is contained in an exposed face $G$ of $C$, in particular, in a proper face of $C$. Let $G$ be the smallest face of $C$ containing $F$. By assumption (H3) and Proposition \ref{prop:2}, $G$ is a polytope. The extremal points of $F$ belong to $\mathcal{C}$. The subset $F$ of $G$ is in the boundary of $C$. By assumption (H1), the extremal points of $F$ are extremal points of 
$C$. Since $F \subset G \subset C$, they are also extremal points of $G$. Thus either $F=G$ or $F$ is a sub-simplex of $G$ by assumption (H2). The latter case contradicts the minimality of $G$.
\end{proof}

Let $F$ be a proper face of $C$. We seek a sequence $F_\varepsilon$ of facets of $A_\varepsilon$ Hausdorff converging to $F$. In general, such a sequence does not exist, even under the assumptions (H1), (H2), (H3). 
We need to additionally require the face $F$
to be {\em uniquely exposed}, that is, there is a unique
 halfspace $H^+$ with $C \subset H^+$ and $F = C \cap - H^+$. For an
 example see Figure~\ref{fig:img2} (right).

\begin{theorem} \label{th:1} Assume {\rm (H1)} and let $F$ be a simplex which is a uniquely exposed face of $\,C$. Then $F$ is the Hausdorff limit of a sequence $\{F_\varepsilon\}_{\varepsilon {\scriptscriptstyle\searrow}0}$ of facets of ${A}_\varepsilon$.
\end{theorem}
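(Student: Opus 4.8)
The plan is to produce the approximating facets explicitly from a good sampling of the curve near $F$, rather than extracting them from an arbitrary sequence. Write $F = \mathrm{conv}\{v_0,\dots,v_k\}$ as a $k$-simplex, where each $v_j$ lies on $\mathcal{C}$ by (H1) together with the fact that extremal points of $C$ lie on $\mathcal{C}$. Let $H^+$ be the unique halfspace with $C \subset H^+$ and $F = C \cap (-H^+)$; write $H^+ = \{x : y^Tx \le \gamma\}$ with $\|y\|=1$, so $y^Tv_j = \gamma$ for all $j$ and $y^Tz < \gamma$ for every $z \in \mathcal{C} \setminus F$. The first step is to perturb the exposing functional slightly so that it still exposes a face, but now a face that is a full-dimensional (i.e. $(n-1)$-dimensional) facet whose vertex set, after sampling, converges to $\{v_0,\dots,v_k\}$.

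First I would fix, for each $\varepsilon$ in the sequence, points $v_j^\varepsilon \in \mathcal{A}_\varepsilon$ with $v_j^\varepsilon \to v_j$; this is possible because the $\mathcal{A}_\varepsilon$ are $\varepsilon$-approximations of $\mathcal{C}$ and the $v_j$ lie on $\mathcal{C}$. Since $F$ is a simplex, the $v_j^\varepsilon$ are affinely independent for $\varepsilon$ small, so they span a $(k-1)$-dimensional affine subspace $L_\varepsilon \to \mathrm{aff}(F)$. Next I would argue that for $\varepsilon$ small there is a supporting hyperplane $H_\varepsilon$ of $A_\varepsilon$ containing $v_0^\varepsilon,\dots,v_k^\varepsilon$ and no other point of $\mathcal{A}_\varepsilon$: choose a hyperplane through $\mathrm{aff}\{v_0^\varepsilon,\dots,v_k^\varepsilon\}$ with normal $y_\varepsilon$ close to $y$; because $F$ is \emph{uniquely} exposed, every point of $\mathcal{C}\setminus F$ sits strictly below the level $\gamma$ by a margin that, by compactness, is bounded below away from any fixed neighborhood of $F$, and points of $\mathcal{C}$ near $F$ but off $F$ can be pushed below $H_\varepsilon$ by tilting $y_\varepsilon$ appropriately (this is where unique exposedness, as opposed to mere exposedness, is essential — it forbids a ``second'' flat piece that would obstruct the tilt, cf.\ Figure~\ref{fig:img2} on the right). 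The resulting face $G_\varepsilon := H_\varepsilon \cap A_\varepsilon$ is then a face of $A_\varepsilon$ whose vertices all lie within $O(\varepsilon)$ of $F$, hence $d(G_\varepsilon, F) \to 0$.

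The face $G_\varepsilon$ constructed this way need not be a facet — it may have dimension smaller than $n-1$. To upgrade to a facet I would use that a face of a polytope is the intersection of the facets containing it: pick any facet $F_\varepsilon \supseteq G_\varepsilon$ of $A_\varepsilon$, with outer normal $\tilde y_\varepsilon$. I then need $d(F_\varepsilon, F) \to 0$ as well. For the ``$F_\varepsilon$ is close to $F$'' direction this follows once we know $\tilde y_\varepsilon \to y$: since $G_\varepsilon \subset F_\varepsilon$ and the vertices of $G_\varepsilon$ converge to the affinely spanning set $\{v_j\}$ of $\mathrm{aff}(F)$, any accumulation normal $\tilde y$ of $\tilde y_\varepsilon$ satisfies $\tilde y^Tv_j = \lim \tilde y_\varepsilon^T v_j^\varepsilon = $ const, and applying Proposition~\ref{prop:1} to $F_\varepsilon$ shows the Hausdorff limit of (a subsequence of) the $F_\varepsilon$ is contained in an exposed face of $C$ lying in the hyperplane $\{\tilde y^T x = \tilde\gamma\}$; since that hyperplane contains $F$ and $F$ is \emph{uniquely} exposed, $\tilde y = y$ and the exposed face is $F$ itself, so the limit is contained in $F$. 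Conversely every vertex $v_j$ of $F$ is a limit of vertices of $F_\varepsilon$ by the same argument as in Lemma~\ref{lem:0} applied inside $F_\varepsilon$, using (H1) to see $v_j$ is extremal in $C$ and hence in the limit body of the $F_\varepsilon$. Combining the two inclusions, $F_\varepsilon \to F$ in Hausdorff distance along the subsequence; and since every subsequence has a sub-subsequence with this property, the whole sequence converges.

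\textbf{Main obstacle.} The delicate point is the existence of the tilted supporting hyperplane $H_\varepsilon$: I must rule out that, arbitrarily close to $F$, the curve $\mathcal{C}$ bends in a way that prevents any hyperplane through (near-copies of) $v_0,\dots,v_k$ from supporting $A_\varepsilon$ — equivalently, that the sampled points near $F$ do not force a different, larger flat. This is exactly the content of unique exposedness plus (H1): (H1) guarantees there are no non-extremal curve points trapped in the supporting hyperplane (the obstruction on the left of Figure~\ref{fig:img2}), and unique exposedness guarantees the exposing normal can be chosen rigidly enough that tilting separates everything off $F$ (the obstruction on the right). Making the "margin bounded below" compactness argument quantitative enough to choose $y_\varepsilon$ — uniformly over the sequence — is the technical heart of the proof; once it is in place, the facet-upgrade and the convergence bookkeeping are routine.
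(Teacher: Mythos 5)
There is a genuine gap, and it sits exactly where you locate your ``main obstacle'': the existence of a supporting hyperplane $H_\varepsilon$ of $A_\varepsilon$ passing through points $v_j^\varepsilon\in\mathcal{A}_\varepsilon$ that were chosen \emph{in advance} merely by the condition $v_j^\varepsilon\to v_j$. This claim is false in general, not just hard to prove. Near a vertex $v_j$ of $F$ the curve touches the exposing hyperplane $\{y^Tx=\gamma\}$ and bends away from it; the sample point closest to $v_j$ in Euclidean distance need not be the sample point that is extremal in the direction $y$ (take a nonsymmetric local branch such as $s\mapsto(s,-s^2+10s^3)$ at $s=-0.049$ and $s=0.05$: the first is nearer to the vertex, the second is higher). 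Then every hyperplane through your $v_0^\varepsilon,\dots,v_k^\varepsilon$ with normal close to $y$ has some other sample point strictly on the wrong side, so no such supporting hyperplane exists and $\conv\{v_0^\varepsilon,\dots,v_k^\varepsilon\}$ is not a face of $A_\varepsilon$. The correct points $v_j^\varepsilon$ can only be identified \emph{a posteriori} as vertices of an actual face of $A_\varepsilon$, which is precisely what has to be constructed; prescribing them first makes the argument circular. Your subsequent steps (upgrading $G_\varepsilon$ to a facet, and the two-sided Hausdorff convergence via Proposition~\ref{prop:1}, Lemma~\ref{lem:0} and unique exposedness) are sound, but they all feed on the unproved existence of a face of $A_\varepsilon$ containing sample points near \emph{every} vertex of $F$ --- note in particular that ``pick any facet $F_\varepsilon\supseteq G_\varepsilon$'' is only controlled because $G_\varepsilon$ is assumed to already span all of $\mathrm{aff}(F)$ approximately.

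The paper avoids this trap by anchoring at the \emph{centroid} of $F$ rather than at its vertices. After translating the centroid to the origin, it first proves the isolation statement \eqref{eq:iso} (using (H1) and compactness, as you do), then runs the linear program \eqref{eq:lp} $\min\{\mu : \mu h\in A_\varepsilon\}$ along the normal direction $h$. Unique exposedness enters to establish the interior condition \eqref{eq:int}, which via complementary slackness makes the dual \eqref{eq:dual_lp} have a vertex solution; that vertex supplies $n$ affinely independent active sample points, i.e.\ a genuine \emph{facet} $F_\varepsilon$ containing the boundary point $\mu_\varepsilon h$. Because $\mu_\varepsilon h$ tends to the centroid of $F$ and, by \eqref{eq:iso}, can only be written as a convex combination of sample points $\delta$-close to the vertices of $F$, the simplex hypothesis forces $F_\varepsilon$ to have a vertex near each $v_j$ --- this is where the assumption that $F$ is a simplex is actually used, something your outline never invokes. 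If you want to keep your architecture, you must replace the ``tilted hyperplane through chosen points'' step by some such optimization that produces the facet and its vertices simultaneously.
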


\begin{proof} Let $v_0,\ldots,v_k$ be the vertices of $F$. Without loss of generality, $\frac{1}{k+1}\sum_{i=0}^k v_i = 0$. The halfspace which defines $F$ in $C$ has the form
	$ H^+_\gamma :=\{x \in \RR^n\mid h^T x \geq \gamma\}$,
	where $\gamma = 0$ and $h\in \RR^n$ with $\|h\|=1$ is unique.
	We claim that for every $\delta > 0$ there exists $\gamma >0$ such that 
\begin{equation}\label{eq:iso}
	\forall\, y \in \mathcal{C} \backslash H_\gamma^+ \,\,\, \exists \,i \in \{0,1,\dots,k\}:\;
	 \|y-v_i\| \,< \,\delta.
\end{equation}
We prove this by contradiction. Suppose
there exists  $\delta > 0$ such that for all $\gamma > 0$ there exists $y \in \mathcal{C} 
\backslash H_\gamma^+$  with $\|y-v_i\| \geq \delta$ for all vertices $v_i$ of $F$. Thus we can construct a sequence of curve points approaching $-H^+ = -H^+_0$ but maintain a distance of at least $\delta$ 
from each $v_i$. By compactness of $\mathcal{C}$, this sequence can be assumed to 
converge to some $z \in -H^+ \cap \mathcal{C} \subset F$. Since the curve point $z$ belongs to the boundary of $C$, assumption (H1) ensures that $z$ is an extremal point of $C$. Hence, $z$ is a vertex of $F$ different from $v_0,\dots,v_k$, a contradiction.

For $\varepsilon > 0$, we consider the linear program
\begin{equation}\label{eq:lp}
	\min \mu \quad \text{s.t.}\quad \mu h \in A_{\varepsilon}.
\end{equation}
We claim that the following holds for sufficiently small $\varepsilon>0$:
\begin{equation}\label{eq:int}
\text{span}\,h \,\cap \,\text{int}\,A_\varepsilon\,\, \neq \,\,\emptyset.
\end{equation}
 Assume the contrary. Then, for all $\varepsilon > 0$, $\text{span}\,h$ and ${A}_\varepsilon$ can be separated weakly by a hyperplane $H(\varepsilon) = \{x \in \RR^n\mid h_\varepsilon^T x = \gamma_\varepsilon\}$ with $\|h_\varepsilon\|=1$. By a compactness argument, a subsequence of $\{(h_\varepsilon,\gamma_\varepsilon)\}_{\varepsilon {\scriptscriptstyle\searrow} 0}$ converges to $(\bar h, \bar \gamma)$ with $\|\bar h\|=1$.
  The hyperplane $\bar H = \{x \in \RR^n\mid \bar h^T x = \bar \gamma\}$ weakly separates $\text{span}\,h$ and $C$. Since $0$ is contained in both $C$ and $\text{span}\,h$, we have $\bar \gamma = 0$. Since $0$ is a relative interior point of $F$, we must have $F \subset \bar H$. Hence, $F$ is exposed with respect to a halfspace $\bar H^+ := \{x \in \RR^n\mid \bar h^T x \geq \bar \gamma\}$ corresponding to $\bar H$. Since $H^+ \neq \bar H^+$, this contradicts the assumption that $F$ is uniquely exposed with respect to $H^+$.

Let $\mu_\varepsilon$ be the optimal value of \eqref{eq:lp}. We have
 $\{\mu_\varepsilon\}_{\varepsilon{\scriptscriptstyle\searrow} 0}= 0$.
  We will use linear programming duality to show that, for
   $\varepsilon >0$ sufficiently small, $\mu_\varepsilon h$ belongs to a facet    of the form
$ \,F_\varepsilon \,=\, {A}_\varepsilon\, \cap \,\{x \in \RR^n\mid y_\varepsilon^T x = \mu_\varepsilon\}\,$
where $\,h^T y_\varepsilon = 1$.
Let $M_\varepsilon$ is the matrix with columns $\mathcal{A}_\varepsilon$ 
and $e=(1,\dots,1)^T$.
 The linear program dual to \eqref{eq:lp} is
\begin{equation}\label{eq:dual_lp}
	\max \,\eta \quad \text{s.t.}\quad M_\varepsilon^T y - e \eta \geq 0
	\,\,\,{\rm and} \,\,\, h^T y = 1,
\end{equation}
Let $(y_\varepsilon,\eta_\varepsilon)$ denote an optimal solution of \eqref{eq:dual_lp}. By duality, $\mu_\varepsilon=\eta_\varepsilon$. We conclude that
the set $F_\varepsilon$ is a face of the polytope $A_\varepsilon$.

 To see that $F_\varepsilon$ is a facet of $A_\varepsilon$, we replace \eqref{eq:lp} and \eqref{eq:dual_lp} by the pair of dual problems
\begin{equation}\label{eq:lp_hom}
	\min 0 \mu \quad \text{s.t.}\quad \mu h \in A_{\varepsilon},
\end{equation}
\begin{equation}\label{eq:dual_lp_hom}
	\max \eta \quad \text{s.t.}\quad M_\varepsilon^T y - e \eta \geq 0 \,\,\, {\rm and} \,\,\, h^T y = 0.
\end{equation}
Using complementary slackness, we conclude from \eqref{eq:int}
 that $(y,\eta)=(0,0)$ is the unique optimal solution of \eqref{eq:dual_lp_hom}. Hence
 the set of optimal solutions of \eqref{eq:dual_lp} is bounded, and we can choose
  $(y_\varepsilon,\eta_\varepsilon)$  to be a vertex. At least $n$ linearly independent inequalities in \eqref{eq:dual_lp} hold with equality at
 $(y,\eta)=(y_\varepsilon,\eta_\varepsilon)$. These correspond to $n$ affinely independent 
 points in~$\mathcal{A}_\varepsilon$, all belonging to the hyperplane $\{x \in \RR^n\mid y_\varepsilon^T x = \mu_\varepsilon\}$. This shows that $F_\varepsilon$ is a facet of ${A}_\varepsilon$.

From \eqref{eq:iso}, we conclude that, for sufficiently small $\varepsilon > 0$,
 the point $\mu_\varepsilon h \in F_\varepsilon$ (which approaches the mean of the vertices of $F$) can be represented only by elements $y \in \mathcal{A}_\varepsilon$ with $\|y-v\|<\delta$ for some vertex $v$ of $F$. Since $F$ is a simplex, each vertex $v$ of $F$ is used in this representation. 
Hence, for each vertex $v$ of $F$ there exists a vertex $y$ of $F_\varepsilon$ with $\|y-v\|<\delta$.

We claim that, if $\varepsilon > 0$ is sufficiently small 
then for every vertex $y$ of $F_\varepsilon$ there exists a vertex $v$ of $F$ with $\|y-v\|<\delta$. Assume the contrary. Then, by \eqref{eq:iso}, for any small $\varepsilon > 0$,
there is a vertex $y_\varepsilon$ of $F_\varepsilon$ with $y_\varepsilon \in \mathcal{C} \cap H^+_\gamma$.
  By compactness of $\mathcal{C}$, we may assume that $\{y_\varepsilon\}_{\varepsilon{\scriptscriptstyle\searrow} 0}$ 
    converges to some $\bar y \in \mathcal{C} \cap H^+_\gamma$.
By Proposition \ref{prop:1}, $\conv \left(F \cup \{\bar y\}\right)$ belongs to an exposed face of $C$. Since $\bar y \not\in -H^+$, this contradicts the assumption that $F$ is uniquely exposed.
We conclude that for every $\delta > 0$ we find $\varepsilon_0 > 0$ such that $d(F_\varepsilon,F)<\delta$ for all $0 < \varepsilon \leq \varepsilon_0$.
Hence, the simplex face $F$ of $C$ is the Hausdorff limit of the sequence
$\{F_\varepsilon\}_{\varepsilon {\scriptscriptstyle\searrow}0}$, as desired.
\end{proof}

\begin{remark} \rm  In the proof of Theorem \ref{th:1}, we  construct a sequence $\{F_\varepsilon\}_{\varepsilon {\scriptscriptstyle\searrow}0}$ of facets of ${A}_\varepsilon$ 
whose vertices converge to the 
 vertices of $F$. This is stronger than Hausdorff convergence.
 \end{remark}

\begin{remark} \rm 
In our proofs we had assumed, for simplicity, that the curve $\mathcal{C}$
is smooth and that the sampled points lie exactly on $\mathcal{C}$.
Our results should extend to nonsmooth curves and
to points that are sampled around $\mathcal{C}$ with a given accuracy. The investigation of polyhedral approximations  of $\,C = {\rm conv}(\mathcal{C})\,$ under
such weaker assumptions is left for future work.
\end{remark}

\section{Convex Hulls in Bensolve}
\label{sec4}

The key step in our solution to Problem (i) is the computation 
of the convex hull of an $\varepsilon$-approximation of a curve $\mathcal{C}$.
There are many methods and implementations for  convex~hulls.
For this paper, the software {\tt Bensolve Tools} ~\cite{CLW}
was used. It is based on Benson's algorithm; see e.g.~\cite{ELS}.
One reason for that choice
 is the output sensitivity of the underlying method. This means that the runtime is mainly dependent on the number of facets and vertices of the polytope that is the convex hull  
and its dimension. In particular,
the number of sampled  points in the interior of $C_\mathcal{A}$ only marginally influences the computation time. Another advantage of Benson's algorithm is the possibility to set the  parameter $\varepsilon$ in Algorithm \ref{alg:bensons}. This feature enables the approximative representation of highly complex convex hulls in a reasonable amount of time. In addition, the process can be aborted at any point while still providing an outer approximation. For small values of $\varepsilon$, we obtain exact solutions, up to numerical inaccuracy of the vertex enumeration routine.

We next discuss this software, its underlying methodology, and how we apply~it.
{\tt Bensolve} \cite{LW} is a solver for multiple objective linear programs (MOLP).
In {\tt Bensolve Tools} it is utilized to perform many polyhedral calculus operations, 
among them convex hull.
The key insight behind this is that multiple objective linear programming is equivalent to polyhedral projection \cite{LW2}. Convex hull computation is a special case of polyhedral projection. 
This follows from standard facts in Ziegler's textbook
\cite[Chapter~1]{Zie}. We state it as follows:

\begin{lemma}
The convex hull of  $\,\mathcal{V} = \{v^1,v^2,\ldots,v^k\}  $ in $ \RR^n$ is the polytope
$$
 {\rm conv}(\mathcal{V}) \,\,=\,\, \{ y \in \RR^n \,\mid \,\exists \lambda \in \RR^k:\; \lambda \geq 0,\; e^T \lambda = 1,\;y = V\lambda\},
$$ 
where $ V\in \RR^{n \times k}$ is the matrix with set of columns $\mathcal{V}$ and
$e=(1,1,\ldots,1)^T$.
Hence, the convex hull of $\,\mathcal{V}$ is a projection into $\RR^n$ of the polytope 
$$
 Q\,\,=\,\,\{ (y,\lambda) \in \RR^n \times \RR^k \,\mid \, \lambda \geq 0,\; e^T \lambda = 1,\;y = V\lambda\}.
$$
\end{lemma}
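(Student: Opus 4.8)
The plan is to prove the two assertions in turn, both of which amount to unwinding the definition of $\conv(\mathcal{V})$ and invoking the elementary polytope facts collected in \cite[Chapter~1]{Zie}. First I would establish the set equality $\conv(\mathcal{V}) = \{\, y\in\RR^n \mid \exists\,\lambda\in\RR^k\colon \lambda\geq 0,\ e^T\lambda=1,\ y=V\lambda \,\}$. Recall that $\conv(\mathcal{V})$ is by definition the smallest convex set containing $\mathcal{V}$, and that a standard argument identifies it with the set of all finite convex combinations of points of $\mathcal{V}$. Because $\mathcal{V}$ is finite with fixed index set $\{1,\dots,k\}$, every such convex combination can be written as $\sum_{i=1}^k \lambda_i v^i = V\lambda$ with $\lambda\ge 0$ and $e^T\lambda=1$, after padding any absent terms with zero coefficients; conversely, every vector of the form $V\lambda$ with $\lambda$ in the standard simplex is visibly a convex combination of the $v^i$ and hence lies in $\conv(\mathcal{V})$. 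This proves the displayed equality, and, since the convex hull of a finite point set is a polytope, it also justifies calling this set ``the polytope''.

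For the second assertion I would observe that $Q$ is cut out of $\RR^n\times\RR^k$ by the finitely many linear constraints $\lambda\ge 0$, $e^T\lambda=1$, $y=V\lambda$, so $Q$ is a polyhedron; it is bounded because its $\lambda$-coordinates lie in the standard simplex and then $y=V\lambda$ is confined to a bounded region, so $Q$ is in fact a polytope. The coordinate projection $\pi\colon\RR^n\times\RR^k\to\RR^n$, $(y,\lambda)\mapsto y$, has image $\pi(Q)=\{\,y \mid \exists\,\lambda\colon (y,\lambda)\in Q\,\}$, which is exactly the set appearing in the first part, namely $\conv(\mathcal{V})$. Since a linear image of a polytope is a polytope, this simultaneously exhibits $\conv(\mathcal{V})$ as $\pi(Q)$ and reconfirms that it is a polytope.

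I do not expect any real obstacle here: the only step deserving a line of care is the passage from ``convex combination of a subset of $\mathcal{V}$'' to ``convex combination using all $k$ generators with nonnegative weights'', which is handled by inserting zero coefficients, and the facts that bounded polyhedra are polytopes and that linear images of polytopes are polytopes are precisely the standard results cited from \cite[Chapter~1]{Zie}. Accordingly the final write-up can be kept to a few lines.
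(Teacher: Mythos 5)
Your argument is correct and is exactly the standard reasoning the paper invokes: the paper gives no explicit proof of this lemma, simply citing \cite[Chapter~1]{Zie} for the identification of $\conv(\mathcal{V})$ with the set of convex combinations $V\lambda$, $\lambda\geq 0$, $e^T\lambda=1$, and the observation that this set is the coordinate projection of $Q$. Your write-up merely fills in those standard details, so it matches the paper's approach.
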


To understand the computation of  ${\rm conv} (\mathcal{V})$ from $Q$,
let us turn to an arbitrary polyhedral projection problem. By Fourier-Motzkin Elimination, 
every linear projection of a polyhedron is a polyhedron. This leads to the concept of a {\em P-representation} 
of a polyhedron. Let $M \in \RR^{n\times k}$, $B \in \RR^{m\times k}$, $a \in \RR^m$ be given.
The triple $(M,B,a)$ represents the polyhedron
\begin{equation}\label{pp}
	P \,\,=\,\, \{Mx \mid Bx \geq a\} \,\,= \,\,\{y \in \RR^n \mid \exists x \in \RR^k:\; y=Mx,\; Bx \geq a\} .
\end{equation}

In what follows, we restrict to polytopes (bounded polyhedra). 
Given a P-representation (\ref{pp}) of a polytope, the polyhedral projection problem is to compute an irredundant  {\em V-representation}, i.e. a representation as convex hull of finitely many points, and an irredundant {\em H-representation}, i.e. a representation by finitely many linear inequalities (cf.~\cite{Zie}).

Given a triple $(M,B,a)$ as above, the associated 
{\em multiple objective linear program}~is
\begin{equation}\label{molp}
	\min\, Mx \quad \text{s.t.} \quad B x \geq a. \tag{MOLP}
\end{equation}
The {\em upper image} of the program (\ref{molp})  is the polyhedron
\begin{equation}\label{upperimg}
	\mathcal{P} \,\,=\,\, \bigl\{\, y \in \RR^n \mid \exists x \in \RR^k:\; y \geq Mx ,\; Bx \geq a \,\bigr\}.
\end{equation}
A solution of (\ref{molp}) consists of irredundant V- and H-representations of  $\mathcal{P}$.
This concept of solution can be used to address the polyhedral projection problem:

\begin{proposition}[cf. {\cite[Theorem 3]{LW2}}]
The solution of the MOLP
\begin{equation}\label{molp1}
	\min \begin{pmatrix} M \\ -e^T M\end{pmatrix} x \quad \text{s.t.} \quad B x \geq a,
\end{equation}
yields an irredundant V- and H-representation of the P-represented polyhedron (\ref{pp}).
\end{proposition}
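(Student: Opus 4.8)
The plan is to make \eqref{molp1} explicit and then transfer data from its solution. A solution of \eqref{molp1} is, by definition, an irredundant $V$- and $H$-representation of its upper image $\mathcal{P}$. First I would compute $\mathcal{P}$: since $\{Mx \mid Bx \geq a\}$ is exactly the polytope $P$ of \eqref{pp}, the image of the objective map of \eqref{molp1} over the feasible region is $\widetilde{P} := \{(p,\,-e^T p) \mid p \in P\}$, the graph of the linear form $p \mapsto -e^T p$ over $P$. Hence \eqref{upperimg} gives $\mathcal{P} = \widetilde{P} + \RR_{\geq 0}^{n+1}$, and since $P$ (hence $\widetilde{P}$) is bounded, the recession cone of $\mathcal{P}$ equals $\RR_{\geq 0}^{n+1}$.

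The point of the extra objective row $-e^T M$ is that $\widetilde{P}$ becomes a face of $\mathcal{P}$. Writing $\mathbf{1}=(1,\dots,1)\in\RR^{n+1}$, every point of $\widetilde{P}$ satisfies $\mathbf{1}^T z = 0$, whereas $\mathbf{1}^T z\geq 0$ on $\RR_{\geq0}^{n+1}$ and therefore on all of $\mathcal{P}$; so $\widetilde{P} = \mathcal{P}\cap\{z : \mathbf{1}^T z=0\}$ is the face of $\mathcal{P}$ minimizing $\mathbf{1}^T z$, and the affine isomorphism $p\mapsto(p,-e^T p)$ from $\RR^n$ onto the hyperplane $\{\mathbf{1}^T z=0\}$ carries $P$ onto $\widetilde{P}$.

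Next I would transfer the two representations. For vertices: any vertex $v$ of $\mathcal{P}$ decomposes as $v=p+r$ with $p\in\widetilde{P}$ and $r\in\RR_{\geq0}^{n+1}$; if $r\neq 0$ then $v$ is the midpoint of $p$ and $p+2r$, both in $\mathcal{P}$, contradicting extremality, so $v=p\in\widetilde{P}$, and a vertex of $\mathcal{P}$ lying in the face $\widetilde{P}$ is a vertex of $\widetilde{P}$; conversely every vertex of $\widetilde{P}$ is a vertex of $\mathcal{P}$. Thus the vertices of $\mathcal{P}$ are exactly the points $(v,-e^T v)$ for $v$ a vertex of $P$, so deleting the last coordinate from the vertices in the solution returns precisely the vertices of $P$. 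For facets: since $\RR_{\geq0}^{n+1}$ is the recession cone, every facet inequality $(\bar c,c_{n+1})^T z\geq\gamma$ of $\mathcal{P}$ has $(\bar c,c_{n+1})\geq 0$; substituting $z=(y,-e^T y)$ rewrites it as $(\bar c-c_{n+1}e)^T y\geq\gamma$, and running over all facets produces an $H$-representation of $P$. The facet $\widetilde{P}$ of $\mathcal{P}$ is cut out by $\mathbf{1}^T z\geq 0$ and contributes only the trivial inequality $0\geq 0$, which is discarded.

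The step I expect to be the main obstacle is showing the resulting $H$-representation is \emph{irredundant}, i.e. that restriction to $\widetilde{P}$ hits each facet of $P$ exactly once. I would do this via the bijection between the facets of $\mathcal{P}$ other than $\widetilde{P}$ and the facets of $P$: for $c=(\bar c,c_{n+1})\geq 0$ one has that $\mathrm{face}_c(\mathcal{P})$ is the Minkowski sum of $\mathrm{face}_c(\widetilde{P})$ with the cone spanned by the coordinate directions killed by $c$, where under the isomorphism $\widetilde{P}\cong P$ the term $\mathrm{face}_c(\widetilde{P})$ corresponds to $\mathrm{face}_{\bar c-c_{n+1}e}(P)$; a dimension count (using that $\widetilde{P}$ is $n$-dimensional when $P$ is full-dimensional) then shows this is a facet of $\mathcal{P}$ distinct from $\widetilde{P}$ precisely when $\bar c-c_{n+1}e$ exposes a facet of $P$. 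If $P$ is not full-dimensional, $\widetilde{P}$ is a lower face of $\mathcal{P}$ and the same bookkeeping additionally yields the equations cutting out the affine hull of $P$. Alternatively, this entire step is the content of \cite[Theorem 3]{LW2}, which one may simply invoke.
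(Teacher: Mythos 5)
The paper itself offers no proof of this proposition: it is imported wholesale from \cite[Theorem 3]{LW2}, and the mechanics of passing between the upper image $\bar{\mathcal{P}}$ of \eqref{upperimg1} and $P$ are recorded separately in Corollary~\ref{cor:derive_rep}. Your first three paragraphs reconstruct that content correctly: $\widetilde P$ is the face of $\bar{\mathcal P}$ cut out by $\mathbf{1}^T z\ge 0$ where $\mathbf{1}=(1,\dots,1)^T\in\RR^{n+1}$, the recession cone of $\bar{\mathcal P}$ is the orthant, all vertices of $\bar{\mathcal P}$ lie in $\widetilde P$ and correspond bijectively to the vertices of $P$, so the V-representation transfer, including its irredundancy, is sound.

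The gap is in your last paragraph. The asserted equivalence --- that $\mathrm{face}_c(\bar{\mathcal P})$ is a facet of $\bar{\mathcal P}$ distinct from $\widetilde P$ \emph{precisely when} $\bar c-c_{n+1}e$ exposes a facet of $P$ --- fails in one direction, so direct substitution does not produce an irredundant H-representation. Take $c=(e_1,0)$, so $\bar c=e_1$ and $c_{n+1}=0$. Then $\mathrm{face}_c\bigl(\RR_{\geq0}^{n+1}\bigr)=\conv\text{-cone of }\{e_2,\dots,e_{n+1}\}$ is already $n$-dimensional, hence $\mathrm{face}_c(\bar{\mathcal P})=\mathrm{face}_c(\widetilde P)+\mathrm{face}_c\bigl(\RR_{\geq0}^{n+1}\bigr)$ is a facet of $\bar{\mathcal P}$ no matter what $\mathrm{face}_{e_1}(P)$ is; for a generic full-dimensional $P$ (say a regular hexagon in the plane) that face of $P$ is a single vertex, and the induced inequality $y_1\geq\min_{p\in P}p_1$ is valid but redundant for $P$. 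The same happens for each coordinate direction, so the substituted system carries extra ``box-type'' inequalities beyond the facet inequalities of $P$. This is presumably why Corollary~\ref{cor:derive_rep} is careful to claim irredundancy only for the V-representation and merely ``an H-representation'' of $P$: obtaining the irredundant H-representation requires either an explicit redundancy-elimination step or the more careful bookkeeping carried out in \cite[Theorem 3]{LW2}. Your fallback of simply invoking that theorem is exactly what the paper does; the attempted self-contained argument for H-irredundancy, as written, does not close.
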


The upper image of the MOLP in (\ref{molp1}) is the polyhedron
\begin{equation}\label{upperimg1}
\bar{\mathcal{P}} \,\,= \,\,\bigl\{ \,(y,z) \in \RR^n \times \RR \,\mid \, y \geq Mx,\; z \geq -e^T M x,\; 
	Bx \geq a \,\bigr\}.
\end{equation}

\begin{corollary}\label{cor:derive_rep}
The polytope $P = \{Mx \mid Bx \geq a\}$ is obtained from $\bar{\mathcal{P}}$ by setting
$$ P\,\, =\,\, \{ y \in \RR^n \mid \exists z:\; (y,z) \in \bar{\mathcal{P}}, \; e^T y + z = 0\}.$$
An irredundant V-representation of $P$ derives from the set of vertices of $\bar{\mathcal{P}}$ by deleting their last components. An H-representation of $\bar{\mathcal{P}}$ gives an  H-representation of $P$ by adding the equation $z=-e^T y$.
\end{corollary}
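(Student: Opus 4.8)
The plan is to introduce the linear functional $\ell(y,z) := e^T y + z$ on the upper image $\bar{\mathcal{P}}$ and to identify the slice $\{e^T y + z = 0\}$ with a distinguished bounded face of $\bar{\mathcal{P}}$ that is affinely isomorphic to $P$. First I would observe that if $(y,z)\in\bar{\mathcal{P}}$, witnessed by some $x$ with $y\geq Mx$, $z\geq -e^T M x$, $Bx\geq a$, then $e^T y + z \geq e^T M x - e^T M x = 0$, with equality if and only if $y = Mx$ and $z = -e^T M x$. This yields two things at once: (a) $\ell$ is bounded below by $0$ on $\bar{\mathcal{P}}$ and attains this value (whenever $P\neq\emptyset$), so $\mathcal{F} := \{(y,z)\in\bar{\mathcal{P}} \mid e^T y + z = 0\}$ is a nonempty face of $\bar{\mathcal{P}}$; and (b) deleting the last coordinate maps $\mathcal{F}$ onto $P = \{Mx\mid Bx\geq a\}$, since $e^T y + z = 0$ forces $z=-e^T y$ and forces every inequality $y\geq Mx$ to hold with equality. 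On $\mathcal{F}$ the coordinate $z$ is determined by $y$ via $z=-e^T y$, so this deletion is an affine isomorphism $\mathcal{F}\to P$ with inverse $y\mapsto (y,-e^T y)$; in particular $\mathcal{F}$ is a polytope, and the displayed identity $P = \{y\in\RR^n \mid \exists z:\ (y,z)\in\bar{\mathcal{P}},\ e^T y + z = 0\}$ holds.

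Next I would show that \emph{every} vertex of $\bar{\mathcal{P}}$ lies in $\mathcal{F}$. Given a vertex $v=(y_0,z_0)$ with witness $x_0$, write $v = (Mx_0,-e^T M x_0) + r$ with $r=(y_0-Mx_0,\, z_0+e^T M x_0)\geq 0$. Because $P$ is bounded, the recession cone of $\{x\mid Bx\geq a\}$ is annihilated by both $M$ and $-e^T M$, so the recession cone of $\bar{\mathcal{P}}$ equals $\RR^{n+1}_{\geq 0}$; hence $r$ is a recession direction of $\bar{\mathcal{P}}$. If $r\neq 0$, then $v$ lies strictly between $(Mx_0,-e^T M x_0)\in\bar{\mathcal{P}}$ and $v+r\in\bar{\mathcal{P}}$, contradicting extremality of $v$. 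Thus $r=0$ and $v\in\mathcal{F}$. Since a vertex of the face $\mathcal{F}$ is also a vertex of $\bar{\mathcal{P}}$, the vertices of $\bar{\mathcal{P}}$ are exactly the vertices of $\mathcal{F}$. Transporting along the affine isomorphism $\mathcal{F}\to P$, these correspond bijectively to the vertices of $P$, and $P$ is their convex hull; hence deleting the last components of the vertices of $\bar{\mathcal{P}}$ produces an irredundant V-representation of $P$.

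For the H-representation, suppose $D\,(y,z)^T \geq d$ is an H-representation of $\bar{\mathcal{P}}$. By the identity for $P$ established above, $P = \{y\in\RR^n \mid D\,(y,-e^T y)^T \geq d\}$: substituting $z=-e^T y$ turns the system into linear inequalities in $y$ alone whose solution set is $P$. Equivalently, adjoining the equation $z=-e^T y$ to the H-representation of $\bar{\mathcal{P}}$ describes the face $\mathcal{F}$, which projects isomorphically onto $P$. This representation need not be irredundant, which is why the statement only claims \emph{an} H-representation; an irredundant one can instead be read off via the Proposition above (\cite[Theorem~3]{LW2}).

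The routine facts I would quote from \cite{Zie} are that the set of minimizers of a linear functional bounded below on a polyhedron is a face, and that a vertex of a face is a vertex of the ambient polyhedron. The one step needing genuine care is the recession-cone computation showing that every vertex of the unbounded polyhedron $\bar{\mathcal{P}}$ in fact lands on the bounded slice $\mathcal{F}$; this is precisely where boundedness of $P$ enters, and it is what guarantees that the vertex list of $\bar{\mathcal{P}}$ is exactly (not merely contains) the vertex list of $P$ read in the first $n$ coordinates.
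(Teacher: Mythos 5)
Your proof is correct and supplies exactly the argument the paper leaves implicit: the corollary is stated there without proof, as an immediate consequence of the definition of $\bar{\mathcal{P}}$ in (\ref{upperimg1}) and the solution concept for (MOLP). In particular you correctly isolate and justify the one point that genuinely needs care, namely that every vertex of the unbounded polyhedron $\bar{\mathcal{P}}$ already satisfies $e^T y + z = 0$ (using boundedness of $P$ to identify the recession cone of $\bar{\mathcal{P}}$ with $\RR^{n+1}_{\geq 0}$), so that deleting last components of the vertices of $\bar{\mathcal{P}}$ yields exactly, and irredundantly, the vertices of $P$.
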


{\tt Bensolve} computes V- and H-representations of the upper image (\ref{upperimg1}) using
Algorithm \ref{alg:bensons}. This is a version of {\em Benson's algorithm}. It applies to upper images satisfying 
$\mathcal{P} \subseteq y+\mathbb{R}^{n}_{\geq 0}$ for some $y \in \mathbb{R}^n$. This version suffices for handling projections of polytopes including the representation of the convex hull of finitely many points. 
Since the algorithm is numerical, we work with a prescribed tolerance $\varepsilon > 0$.
The output is an {\em $\varepsilon$-approximation} to the
upper hull $\mathcal{P}$, i.e.~it is a polyhedron $\mathcal{O}$
that is {\em $\varepsilon$-close} to $\mathcal{P}$ in the sense that
$ \,\varepsilon e+ \mathcal{O} \subseteq \mathcal{P} \subseteq \mathcal{O}$.

\begin{algorithm_thm} (Benson's algorithm) 
\label{alg:bensons}

\begin{algorithm}[H]
\SetKwData{CommonVerts}{CommonVerts}
\SetKwFunction{Conv}{Conv} \SetKwFunction{Graph}{Graph} 
\SetKwFunction{Ben}{Bensolve} \SetKwFunction{AggregateShort}{AggregateShort}
\SetKwFunction{ConnectedComponents}{ConnectedComponents}
\SetKwInOut{Input}{input}\SetKwInOut{Output}{output}
\Input{(MOLP) given  by the matrices $M$, $B$ and vector $a$; a tolerance  $\varepsilon \geq 0$. }
\Output{$\varepsilon$-close V-representation $\mathcal{V}$ and H-representation $\mathcal{H}$ 
of $\mathcal{P}$ in (\ref{upperimg}).
}
\BlankLine
$T \leftarrow \emptyset$\\
Compute the H-representation $\mathcal{H}$ of an outer approximation of $\mathcal{P}$ having 
 the same recession cone as $\mathcal{P}$. Compute the corresponding V-representation~$\mathcal{V}$. \\ 
\While{$( \mathcal{V}\setminus T) \neq \emptyset$
}{
Choose a vertex $v \in \mathcal{V}\setminus T $. \\
Compute the solution $t^*$ of the linear program $\min \left\{ t \mid v+te \in \mathcal{P}  \right\}$. \\
Compute the solution $(u^*,w^*)$ of the dual linear program 
$\max \left\{ a^Tu -v^Tw \mid B^Tu=M^Tw,\; e^Tw=1,\; w\geq 0 ,\;u \geq 0 \right\}$. \\
\eIf{$t^* \geq \varepsilon$
}{
Refine  $\mathcal{H}$ by adding $ \left\{y \mid (w^*)^Ty \geq a^Tu^* \right\}$ to the description. \\
Update $\mathcal{V}$ by performing vertex enumeration on $\mathcal{H}$.
}
{$T \leftarrow T \cup \{ v \}$}
}
\end{algorithm}
\end{algorithm_thm}

One starts with an initial outer polyhedral approximation of $\mathcal{P}$. Both
 an H-representation and a V-representation are stored.
 Until the tolerance $\varepsilon$ is reached, each iteration adds a linear inequality to refine the outer 
 approximation of $\mathcal{P}$. An iteration step starts by choosing a vertex $v$ of the current  polyhedron. 
 The V-representation  is updated after adding an inequality. From $v$ one moves in direction 
 $e=(1,\ldots,1)^T$ to the boundary point $y=v+t^*e$ of $\mathcal{P}$. To this end, a linear program has to be solved. The solution of the dual linear program yields the desired linear inequality which cuts off $v$ and 
 holds with equality in $y$. 
Algorithm~\ref{alg:bensons} terminates and computes both V- and H-representation 
of an $\varepsilon$-approximation of $\mathcal{P}$.
For computations in this paper we use the {\em dual Benson algorithm} \cite{ELS}. 
It is dual to Benson's algorithm and provides an inner approximation for the upper hull~$\mathcal{P}$.  

We employ {\tt Bensolve} for obtaining a polyhedral approximation of the convex hull of 
a smooth curve $\mathcal{C}$ in $\RR^n$. This is done by computing the
convex hull of a sufficiently large finite subset $\mathcal{A}$ of $\mathcal{C}$. 
The output gives both an irredundant H- and V-representation of an inner $\varepsilon$-approximation $\mathcal{C}_\mathcal{A}$ of $\conv (\mathcal{A})$, and this is our approximation to ${\rm conv}(\mathcal{C})$.
 All facets and all vertices of $\mathcal{C}_\mathcal{A}$  are known after such a computation. 
 The output also contains the {\em incidence matrix} $I_\mathcal{A}$ for facets and vertices 
of $\mathcal{C}_\mathcal{A}$
and the {\em adjacency matrix} $A_\mathcal{A}$ for vertices of $\mathcal{C}_\mathcal{A}$.

\begin{example}\label{ex:yellow_green_1} \rm
Let $\mathcal{C}$ be the trigonometric space curve parametrically given by
\begin{equation} \label{eq:yg1}
\theta \, \mapsto \,\left( \cos(\theta),\sin(2 \theta),\cos(3 \theta) \right) .
\end{equation}
Its convex hull 
$C = {\rm conv}(\mathcal{C})$ is  shown in \cite[Figure 1]{RS2}.
We select the sample points 
\begin{equation}
\label{eq:bytakingA}
\begin{matrix}
\mathcal{A} \,\,= \,\, \left\{ \left( \cos\left(\frac{2k\pi}{N}\right),\sin\left(\frac{4k\pi}{N}\right),\cos\left(\frac{6k\pi}{N}\right) \right) \; \, \middle| \, \; k =  0,\ldots,N-1   \right\}.
\end{matrix}
\end{equation}
Using {\tt Bensolve}, as described above, 
we can compute irredundant V- and H-representations of
the inner approximation $C_\mathcal{A}$ of the polytope $\conv\left(\mathcal{A}\right)$
for various values of $N$ and with specified accuracy $\varepsilon$.
For instance, let $N = 100$ and $\varepsilon=10^{-9}$.
The sample (\ref{eq:bytakingA}) is shown on the left in Figure~\ref{fig:yellow_green_1}.
Its convex hull is the polytope $C_\mathcal{A}$ on the right in Figure~\ref{fig:yellow_green_1}.
It has $70$ vertices and $102$ facets, so, by Euler's relation, it has $170$ edges.
Thus the incidence matrix $I_\mathcal{A}$ is of size $102 \times 70$ and has $340$ nonzero entries. 
The $70 \times 70$ adjacency matrix $A_\mathcal{A}$ also has $340$ nonzero entries.
The polytope $C_\mathcal{A}$ in Figure~\ref{fig:yellow_green_1} already
looks like \cite[Figure 1]{RS2} and Figure \ref{fig:yellowgreen}.
The picture of $C_\mathcal{A}$ reveals the edge surfaces of $C$ and the two triangles in $\partial C$.
The identification of such patches from the {\tt Bensolve} output is our theme in 
Section~\ref{sec5}.
\end{example}

\begin{figure}[htp]
\includegraphics[width=4.2cm]{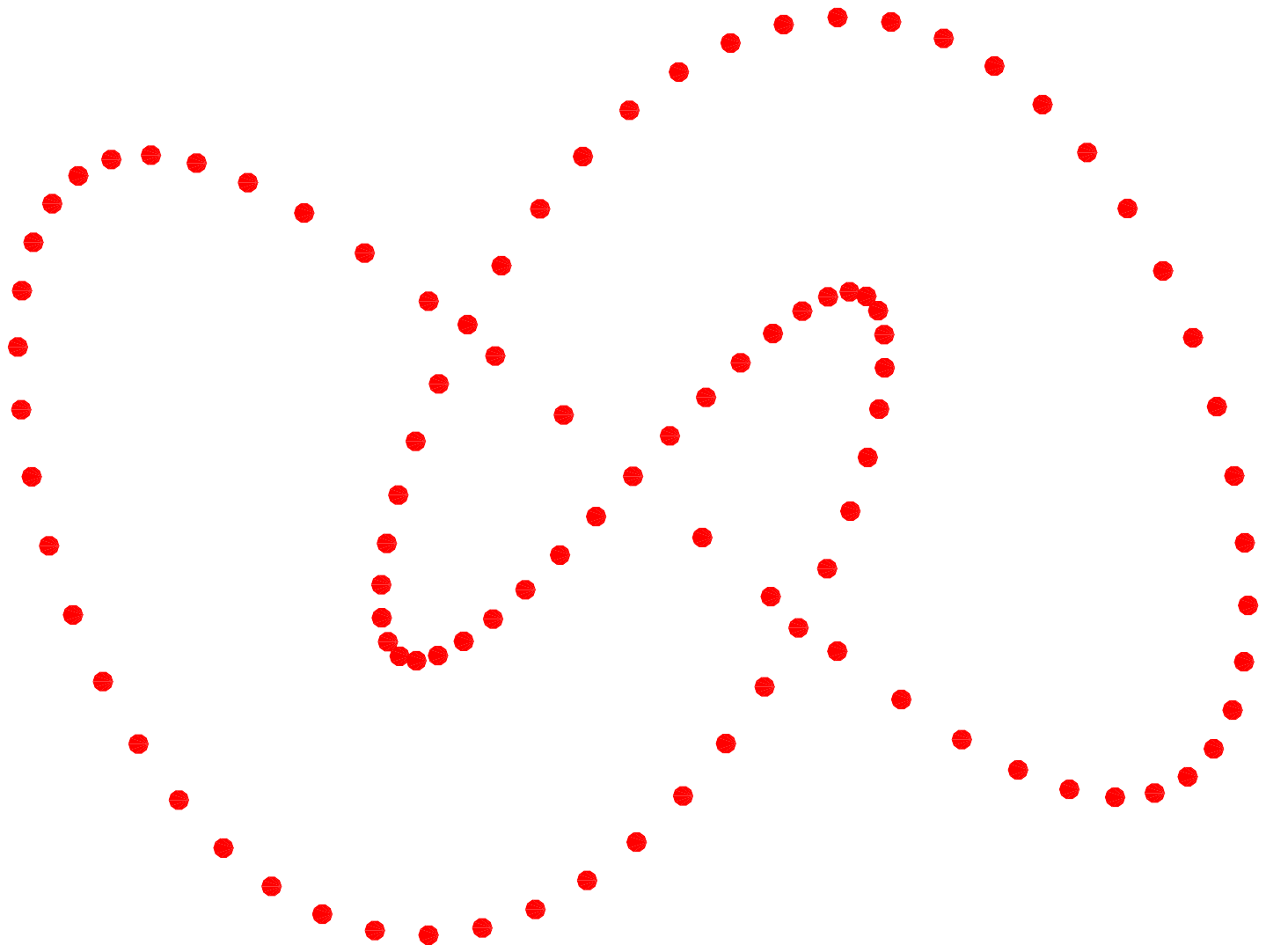} \qquad \qquad
\includegraphics[width=5cm]{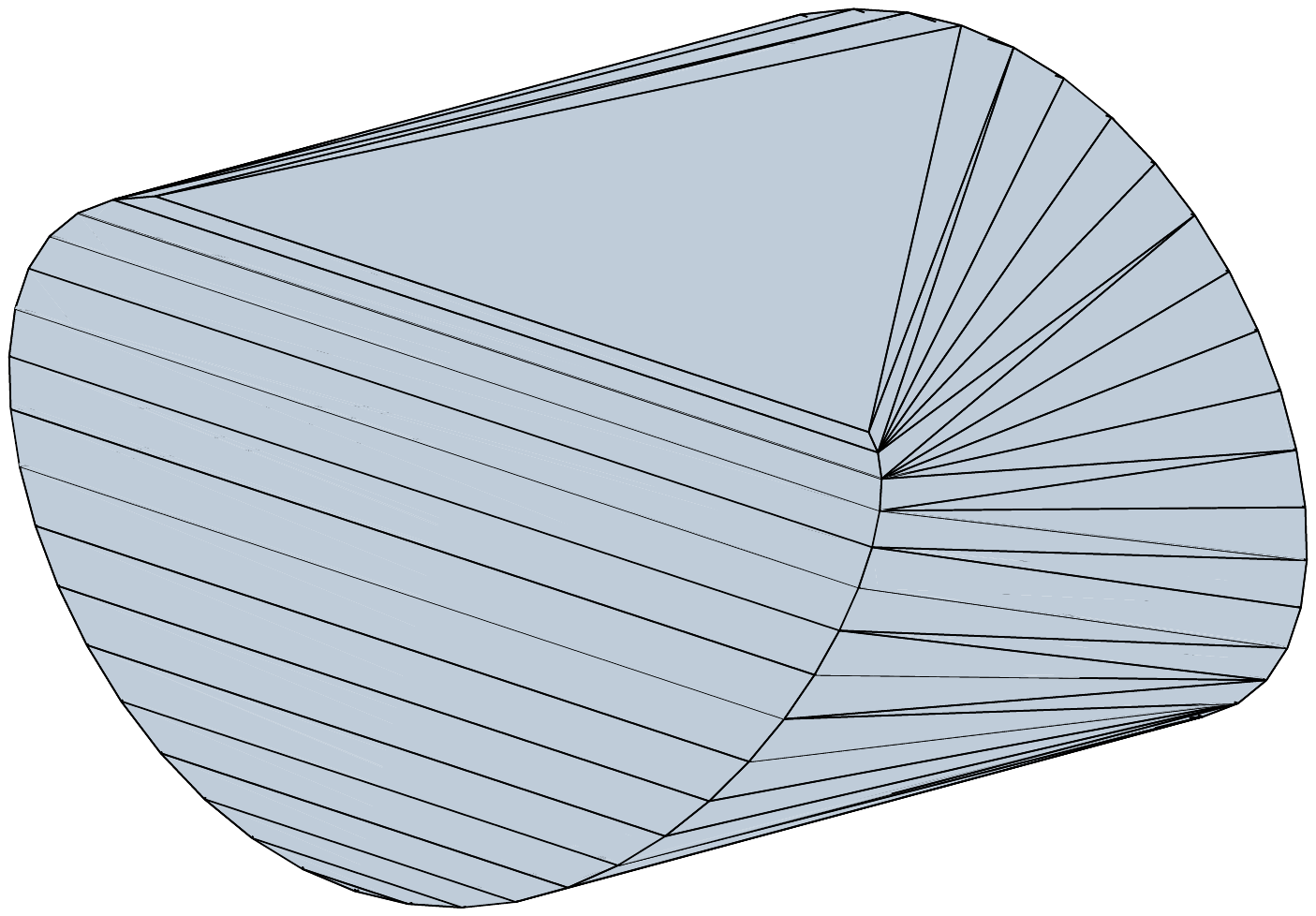}
\caption{\label{fig:yellow_green_1}
A sample of points (left) from a space curve and its convex hull (right).}
\end{figure}

\section{Detection of Patches} \label{sec5}

Let $C$ be a full-dimensional compact convex body in $\RR^n$. The boundary of $\partial C$ 
is an $(n-1)$-dimensional set whose subset  $\partial C_{sm}$ of smooth points is dense.
We shall stratify $\partial C_{sm}$ into finitely many manifolds we call {\em patches}.
Each patch is an $(n-k-1)$-dimensional family of $k$-faces of $C$.
For a typical convex body of dimension $n=3$, the boundary is comprised of
surfaces of extreme points ($k=0$), curves of edges ($k=1$),
and finitely many  facets ($k=2$).
For the general definition, we use the concept of the normal cycle of a convex body. 
Let $\mathbb{S}^{n-1}$ denote the unit $(n - 1)$-sphere. 
Following \cite[eqn (10)]{Fu}, the {\em normal cycle} of $C$ equals
$$ N(C) \quad = \quad \bigl\{\,(u,v) \in \RR^n \times \mathbb{S}^{n-1}\,
:\, v \cdot (u-u') \geq 0 \,\, \hbox{for all}\,\, u' \in C \,\bigr\}. $$

If $\partial C$ is smooth then $N(C)$ is a Legendrian submanifold of dimension $n-1$.
If $C$ is not smooth then we can approximate $C$ by nearby
smooth convex bodies $C_\varepsilon$, for $\varepsilon > 0$. By \cite[Theorem 3.1]{Fu}, the normal cycle
$N(C)$ is the Hausdorff limit of the manifolds $N(C_\varepsilon)$ for $\varepsilon \rightarrow 0$.
The normal cycle $N(C)$ is pure $(n-1)$-dimensional, and its smooth points are dense.

There are several other ways of defining the normal cycle. The one we like best 
uses the dual convex body $C^\vee$. Assuming that the origin is in the interior of $C$, 
\begin{equation}
\label{eq:makeidentification}
 N(C) \quad = \quad \bigl\{\,(u,v) \in \partial C \times \partial C^\vee \,
:\, v \cdot (u-u') \geq 0 \,\, \hbox{for all}\,\, u' \in C \,\bigr\}. 
\end{equation}
The normal cycle comes naturally with two surjective maps
$$ \pi_1 : N(C) \rightarrow \partial C, \,(u,v) \mapsto u \qquad {\rm and} 
\qquad \pi_2 : N(C) \rightarrow \partial C^\vee, \,(u,v) \mapsto v. $$
Let $\mathcal{E} \subseteq \partial C^\vee$ be the set of exposed points of $C^\vee$.
We have $v \in \mathcal{E}$ if and only if there exists $u \in C$ such that
$\pi_1^{-1}(u) = \{(u,v)\}$. 
A subset $\psi$ of $N(C)$ with $\pi_2(\psi) \subset \mathcal{E}$ is called a {\em patch} if
$\psi$ is a connected differentiable manifold,
${\rm dim}(\pi_1(\psi)) = n-1$, the fibers
of $\pi_2$ vary continuously in the Hausdorff metric,
and $\psi$ is maximal with these properties.
We say that $\psi$ is a $k$-patch if ${\rm dim}(\pi_2(\psi)) = n-k-1$.
This means that $\pi_2(\psi)$ is an $(n-k-1)$-dimensional manifold of exposed points of $C^\vee$,
and these exposed points support continuously varying  $k$-faces of $C$.

\begin{remark} \rm \label{rem:finite} If the trajectory $\mathcal{C}$ is algebraic then
its convex hull $\,C$ is semialgebraic. Also the normal cycle $N(C)$
 and all its patches $\psi$ are semialgebraic. This follows from Tarski's theorem on quantifier elimination, and
 we find that the number of patches of $C$ is finite.
We believe that finiteness holds more generally for compact trajectories.
But we do not yet know the precise statement.  Real analytic geometry
is much more delicate than real algebraic geometry.
For instance, the family of
semianalytic sets is not closed under projection. We refer to 
\cite{ABF} for a recent account. The concept of C-semianalytic sets,
introduced in \cite{ABF} and named after Cartan, might be appropriate
for our setting. One can hope that the convex trajectories and their
patches are C-semianalytic when $\phi$ in (\ref{eq:dynamics1}) is polynomial.
\end{remark}

We now study the following computational problem. The input is
a smooth curve $\mathcal{C}$ in $\RR^n$, typically arising as trajectory of a 
dynamical system (\ref{eq:dynamics1}). We seek~the boundary of the 
convex trajectory $C {=} {\rm conv}(\mathcal{C})$. The output is the list of all patches.

\begin{example}[$n=3$] \rm The convex body in Examples \ref{ex:yellow_green_1} 
and \ref{ex:yellowgreen} has six patches. It has two $2$-patches, namely the two
triangles. It has two irreducible edge surfaces (cf.~\cite{RS2}).
These have degrees $3$ and $16$. 
Each  contributes two $1$-patches to $\partial C$.
All six patches are  visible from the edges and
 triangles of the polytope in Figure~\ref{fig:yellow_green_1}.
\end{example}

\begin{example}[$n=4$] \rm
If $C$ is the convex hull of a curve $\mathcal{C}$ in $\RR^4$ then its
$1$-patches are surfaces of edges, its $2$-patches are
curves of $2$-faces, and its $3$-patches are the facets of $C$.
\end{example}

Our goal is to identify all patches of $C = {\rm conv}( \mathcal{C})$ from $\varepsilon$-approximations,
using the results in Section \ref{sec3}. We assume that $\mathcal{C}$ is a simplicial
curve, by which we mean that it satisfies the hypotheses (H1), (H2) and (H3), and the number
of patches of $C$ is finite (cf.~Remark \ref{rem:finite}).

The special case $n=2$ is
solved by Algorithm \ref{alg:boundary_examining_2}.
Algorithm \ref{alg:boundary_examining_3} computes the patches for $n \geq 3$.
We implemented this algorithm for $n=3$ and $n=4$.
A detailed theoretical analysis of this algorithm is left for future work.
The task is to identify the precise conditions under which the
output detects the true patches when applied to $\varepsilon$-approximations
with $\varepsilon \rightarrow 0$.

In the next section we report on some experiments with our methods. The code
 for dimensions $2$, $3$ and $4$ is made available at 
 $${\tt http://tools.bensolve.org/trajectories}.$$
 
 We discuss the steps in Algorithm \ref{alg:boundary_examining_3}. Step 1 is executed using {\tt Bensolve}
as discussed in Section \ref{sec4}. Each facet $H$  comes with its unit normal vector $v(H)$.
Step 2 reflects the conditions in the definition of patches. 
For instance,  the criterion $d(H_1,H_2) \leq \delta$, stating that $H_1$ and $H_2$ are
 close in Hausdorff distance, reflects the continuous variation of $k$-faces.
The exposed points in $\pi_2(\psi) \subset \mathcal{E}$ are represented
by the  vectors $v(H_i)$. The requirement that they are $\delta$-close along the 
edges of $G$ is our discrete version of the smoothness of $\psi$.
In step 3 we identify the connected components of $G$,
and these represent the patches of $C$.

The inner loop in steps 5--7 reflects our results in Section 3.
By Theorem \ref{th:1},  every $k$-face of $C$ is approximated by a facet $H \in G$.
Here, for each vertex of the $k$-face, the algorithm chooses a
nearby vertex $u_i$ of $H$. 
In the loop between steps 4 and 8, it can happen that 
a $\delta$-proximity cluster corresponds to more than one
vertex of the $k$-face. This happens for $k$-faces with an
 edge shorter than $\delta$.
For that reason, we take the maximum in step 9. 

Step 10 is very important and requires some explanation.
In a connected component of $G$, some facets will be misclassified:
they represent $k$-faces of $C$, but step 5 identifies less than $k+1$ 
proximity clusters at the fixed tolerance level $\delta$. For such facets,
step 10 adds additional points $u_i$ from an existing cluster to get
up to the correct value of $k$ for that patch.

\begin{algorithm_thm}(Detection of patches for $n \geq 3$)

\label{alg:boundary_examining_3}
\begin{algorithm}[H]
\SetKwData{CommonVerts}{CommonVerts}
\SetKwFunction{Conv}{Conv} \SetKwFunction{Graph}{Graph} 
\SetKwFunction{Ben}{Bensolve} \SetKwFunction{AggregateShort}{AggregateShort}
\SetKwFunction{ConnectedComponents}{ConnectedComponents}
\SetKwInOut{Input}{input}\SetKwInOut{Output}{output}
\Input{Finite list $\mathcal{A}$ of points on a curve $\mathcal{C}$
in $\RR^n$;  a threshold value $\delta > 0$} 
\Output{For each $k\geq 1$: the expected number $\#_k$ of $k$-patches of $C=\conv \left( \mathcal{C} \right)$ \\
For each $i$: list of $k$-polytopes that represent the $k$-patch $G_i$
}
Compute vertices $\mathcal{V}$, facets $\mathcal{H}$, incidence list $I_\mathcal{A}$ 
and adjacency list $A_\mathcal{A}$ of $\conv \left( \mathcal{A} \right)$. \\
Build a graph $G$ with node set  $\mathcal{H}$ as follows: two facets $H_1,H_2$ form an edge 
if  their unit normals $v(H_i)$   have distance $\leq \delta$,
$\,{\rm dim}(H_1 \cap H_2) = n-2$,  and $\,d(H_1,H_2) \leq \delta$.
\\
\ForEach{connected component $G_i $ of the graph $G$}{ 
\ForEach{facet $H \in G_i$}{
Find representatives $U=\{u_0,\ldots,u_k\}$ of the $\delta$-proximity clusters \\
of vertices of $H$ such that $F=\conv \left( U \right)$ is a $k$-face of $\conv \bigl(\mathcal{A} \bigr)$. \\
Associate the tuple $(u_0,\ldots,u_k;v)$ with that node of $G_i$.
}
$G_i$ represents a $k$-patch of $C$ if $k$ is the largest index encountered in the loop above. \\
Adjust all tuples with smaller indices found in step 7 to that common value of $k$.
}
Output $(\#_1,\ldots,\#_{n-1})$, where $\#_k$ is the number of graphs $G_i$
representing $k$-patches.
\end{algorithm}
\end{algorithm_thm}

\begin{example}[$n=4$] \rm
We illustrate the output of Algorithm~\ref{alg:boundary_examining_3}  when
$\mathcal{C}$ is a random trigonometric curve of degree six in $\RR^4$,
as in Section~\ref{sec6}, and $\mathcal{A} \subset \mathcal{C}$ is a finite approximation.
Figure \ref{fig:conn_comp_2} shows the graph $G$. Each node in $G$ is a face of the polytope ${\rm conv}(\mathcal{A})$. 
We find $\#_3 = 0$.
There are $\#_1 = 3$ patches for $k=1$, represented by the three connected components of $G$
  on the right in Figure~\ref{fig:conn_comp_2}.
These three connected graphs encode surfaces worth of edges.
The number of patches for $k=2$ is $\#_2 = 2$. These two components of $G$
are shown on the left in Figure \ref{fig:conn_comp_2}. Each node represents a triangle face of ${\rm conv}(\mathcal{C})$.
So, the picture on the left shows two curves of triangle faces in the boundary of our 
$4$-dimensional convex body. 
\end{example}

\begin{figure}[h]
\includegraphics[width=4cm]{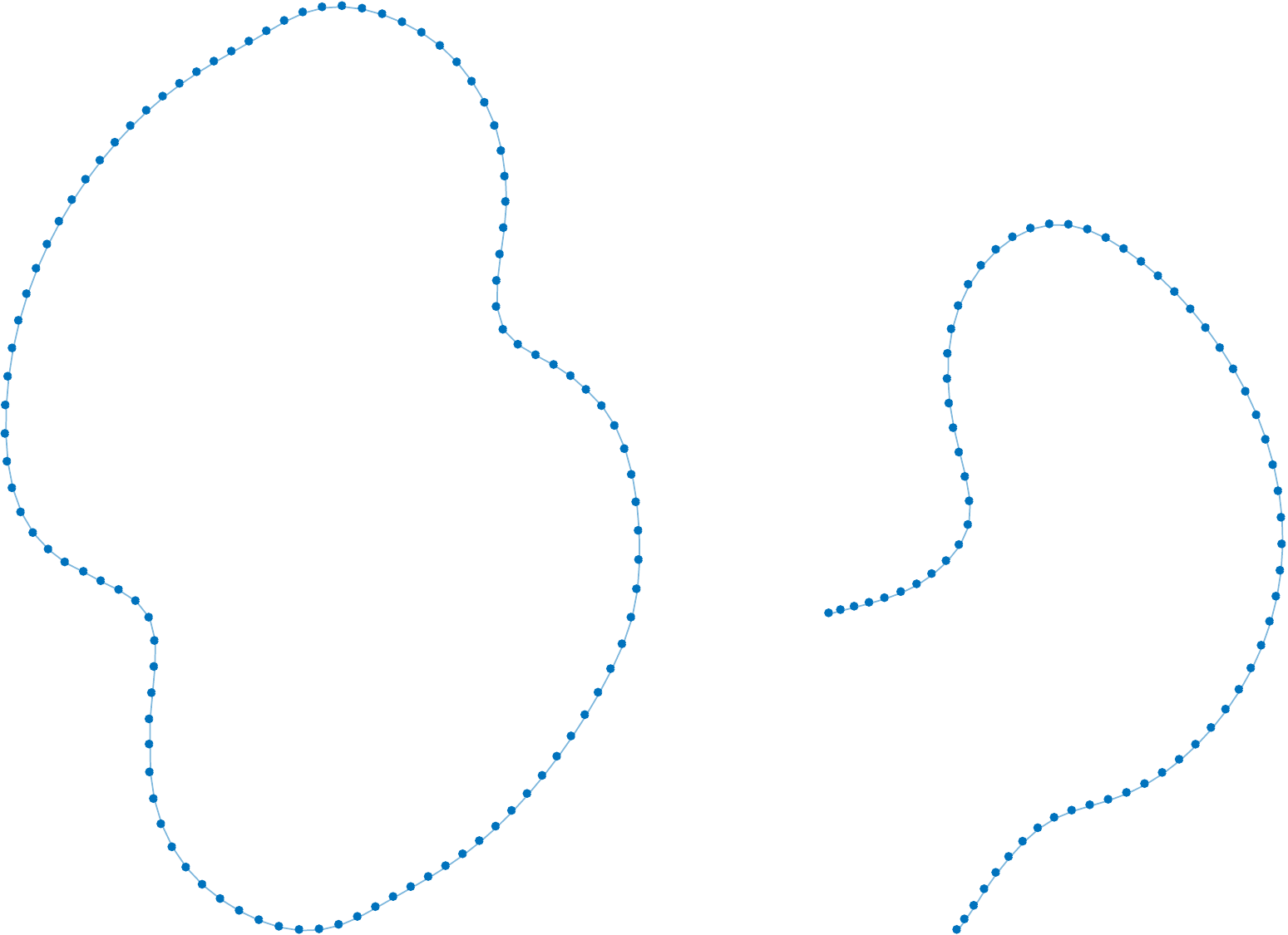} \qquad \qquad
\includegraphics[width=4.5cm]{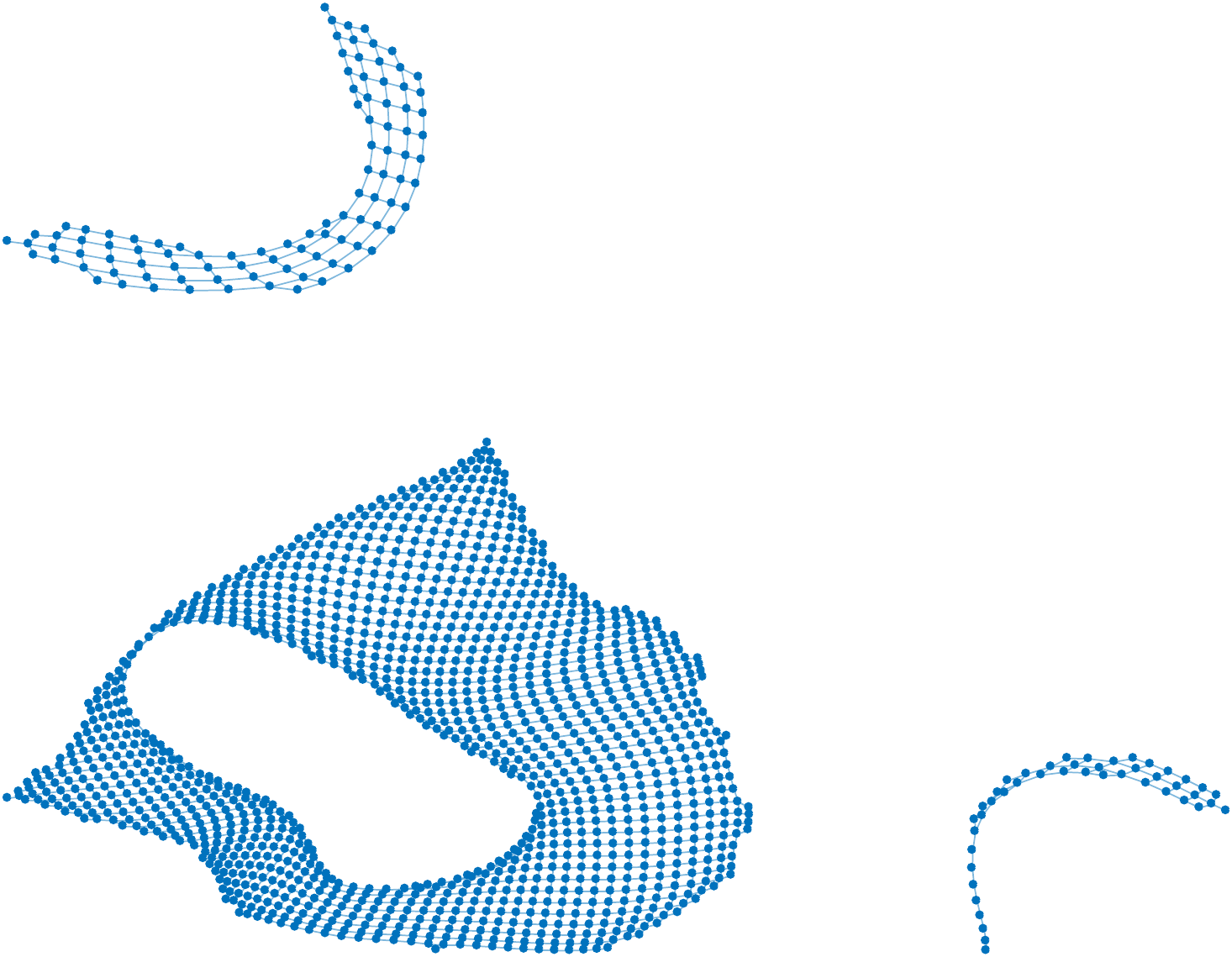}
\caption{\label{fig:conn_comp_2} 
Two $2$-patches (left) and three  $1$-patches (right) in the boundary of a 
$4$-dimensional convex body. It is the convex hull of a trigonometric curve of degree six.
The picture shows the graph $G$, with five connected components $G_i$, 
found by Algorithm~\ref{alg:boundary_examining_3}.
}
\end{figure}

\section{Algebraic and Trigonometric Curves}
\label{sec6}

In what follows we note that every algebraic curve can be realized locally
as the trajectory of an autonomous polynomial dynamical system.
This generalizes the Hamiltonian systems  (\ref{eq:hamiltonian}) seen in Section \ref{sec2}.
Hence, the computation of the convex hull of a real algebraic curve in $\RR^n$, discussed
in e.g.~\cite{RS2}, is a special case of the problem we addressed in Sections \ref{sec3}--\ref{sec5}.

Let $\mathcal{C}$ be an algebraic curve in $\RR^n$ and  let
$z $ be a regular point on $\mathcal{C}$. We construct an appropriate 
vector field $\phi(x)$ on $\RR^n$ as follows. Let
$f_1,f_2,\ldots,f_{n-1}$ be polynomials in $x = (x_1,x_2,\ldots,x_n)$
that cut out the curve $\mathcal{C}$ locally near its point~$z$.  Let $J$ denote their
Jacobian matrix. Thus $J$ is the $(n-1) \times n$ matrix whose entry in row $i$ and column $j$
is the partial derivative $\,\partial f_i / \partial x_j$. 
Let $J_i$ be $(-1)^{i+1}$ times the determinant of the
submatrix of $J$ obtained by deleting the $i$th column.
Fix the vector of polynomials $\phi = (J_1,J_2,\ldots,J_n)^T$.
Locally at $z$,  the kernel of $J$ is the line spanned by the vector $\phi$.
This follows from Cramer's rule, and it implies that $\phi(z)$ is a tangent vector to
the curve $\mathcal{C}$ at its point $z$.
We are interested in the dynamics of the system $\dot x = \phi(x)$ 
when the starting point is $z \in \mathcal{C}$.

\begin{proposition}
\label{prop:algsystem}
The trajectory of the dynamical system $\dot x = \phi(x) $ that starts at
a point $z$ on the algebraic curve $\mathcal{C}$ remains on the curve $\mathcal{C}$.
It either cycles around one nonsingular oval of $\,\mathcal{C}$, or it diverges towards infinity
in $\RR^n$, or it converges to a singular point of $\,\mathcal{C}$.
\end{proposition}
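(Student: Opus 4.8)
The plan is to split the statement into two parts: first, invariance of the curve $\mathcal{C}$ under the flow of $\dot x = \phi(x)$; second, the trichotomy for the behavior of the trajectory. For the first part, I would argue that since $\phi(x)$ spans the kernel of the Jacobian $J$ at every regular point of $\mathcal{C}$ (by Cramer's rule, as noted before the statement), the vector field $\phi$ is tangent to $\mathcal{C}$ at all regular points. Consequently, each defining polynomial $f_i$ satisfies $\frac{d}{dt} f_i(x(t)) = \nabla f_i(x(t))^T \phi(x(t)) = 0$ along solutions, because the $i$th row of $J$ is exactly $\nabla f_i^T$ and $J\phi = 0$. Hence $f_i(x(t))$ is constant, equal to $f_i(z) = 0$, so the trajectory stays on the local branch of $\mathcal{C}$; by real-analyticity this persists as long as the solution exists and stays near regular points. (One should be slightly careful that the $f_i$ cut out $\mathcal{C}$ only locally near $z$; but the trajectory is continuous, so it stays in that neighborhood for small time, and one then extends by continuation along the curve.)

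For the trichotomy, I would use the Picard–Lindelöf theorem together with the structure of a smooth compact/noncompact real algebraic curve. Having established that the trajectory is confined to $\mathcal{C}$, consider the connected component of the regular locus of $\mathcal{C}$ containing $z$. Such a component is a one-dimensional real-analytic manifold, hence diffeomorphic either to a circle or to an open interval (a line). In the circle case, the trajectory is a reparametrized loop around that nonsingular oval — it cycles. In the line case, parametrize the component by arc length; as $t$ increases, $x(t)$ moves monotonically along this arc (since $\phi \neq 0$ on the regular locus, the flow never stops and never reverses), so it must leave every compact subset of the component as $t$ approaches its maximal existence time. Leaving every compact subset of the arc means the trajectory either escapes to infinity in $\RR^n$, or accumulates at a boundary point of the arc, i.e.\ at a point of $\mathcal{C}$ that is not in this regular component — and any accumulation point on $\mathcal{C}$ that terminates the arc must be a singular point of $\mathcal{C}$, since at a regular point the curve would simply continue (and $\phi$ would continue to push through it at nonzero speed, giving a longer trajectory). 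The remaining subtlety is whether the trajectory can genuinely \emph{converge} to a singular point rather than merely accumulate; here one invokes monotonicity along the arc together with compactness: the arc-length coordinate is monotone and bounded, hence convergent, so $x(t)$ converges to a single limit point, which is then the singular point.

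The main obstacle I anticipate is the careful treatment of the local-versus-global defining equations and the behavior near singular points of $\mathcal{C}$: one must ensure that the orbit cannot, say, pass through a singular point and re-emerge on another branch (the vector field $\phi$ may vanish or be discontinuous in a delicate way at singular points, since there $J$ drops rank further and $\phi = (J_1,\ldots,J_n)^T$ can be the zero vector), and that "diverges toward infinity" and "converges to a singular point" are genuinely the only alternatives to cycling. I would handle this by noting that the flow is well-defined and smooth on the regular locus of $\mathcal{C}$ (where $\phi \neq 0$), reducing the analysis to the flow of a nowhere-zero analytic vector field on a $1$-manifold, for which the classification into periodic orbits and orbits limiting to the "ends" is standard; the ends of a regular component of a real algebraic curve are either at infinity or at singular points of the projective/affine closure, giving exactly the stated trichotomy.
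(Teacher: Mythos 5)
Your proposal is correct and follows the same route as the paper, whose entire proof is the one-line remark that the argument is analogous to Corollary~2.1 for Hamiltonian systems: the identity $J\phi=0$ (each entry being a cofactor expansion of a determinant with a repeated row) gives $\tfrac{d}{dt}f_i(x(t))=0$, hence invariance of the level set, exactly as you argue. Your write-up simply supplies the details the paper omits --- in particular the trichotomy via the classification of the flow of a nowhere-vanishing vector field on a one-dimensional regular locus, and the observation that $\phi$ vanishes precisely where $J$ drops rank, i.e.\ at singular points --- all of which is sound (the only glossed-over point, finiteness of arc length of a bounded piece of an algebraic curve, is standard).
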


\begin{proof}
The proof is analogous to Corollary~\ref{cor:hamilton} which dealt with the case
$n~=~2$.
\end{proof}

\begin{example}[$n=3$] \rm
\label{ex:yellowgreen}  Let $\mathcal{C}$ 
be the trigonometric curve (\ref{eq:yg1}) in Example~\ref{ex:yellow_green_1}.
By \cite[\S 1]{RS2},  this is an algebraic curve, namely it is the zero set of the two polynomials
\begin{equation}
\label{eq:isthezeroset}
 f_1 \,= \, x^2 - y^2 - x z \quad {\rm and} \quad f_2 \,=\, z - 4 x^3 + 3 x . 
\end{equation}
With these two polynomials we associate the dynamical system 
\begin{equation}
\label{eq:yellowgreen}
\dot x = -2y \quad {\rm and} \quad
 \dot y = 12 x^3-5 x+z \quad {\rm and} \quad
\dot z = -24 x^2 y+6 y.
\end{equation}
Suppose we start this at a point on the curve $\mathcal{C}$, such as $(1,0,1)$.
The trajectory travels on $\mathcal{C}$ and it stops at the singular point $(0,0,0)$.
To get ${\rm conv}(\mathcal{C})$, we compute
 the convex hull of two trajectories obtained by using two different starting points on the curve given by (\ref{eq:isthezeroset}). The convex body has six patches, as shown in \cite[Figure 1]{RS2} and in our Figures
\ref{fig:yellow_green_1} and \ref{fig:yellowgreen}.

Using the methods  in Section \ref{sec7},
we analyzed the vector field on these facets and patches, and we found points with both
inward and outward pointing directions on each of them.
Figure \ref{fig:yellowgreen} shows a point in a triangle facet with outward pointing direction.
The resulting trajectory is also depicted. This solves Problem (ii) from the Introduction for this example.
\end{example}

\begin{figure}[t]
\begin{center}
 \includegraphics[width=0.3\textwidth]{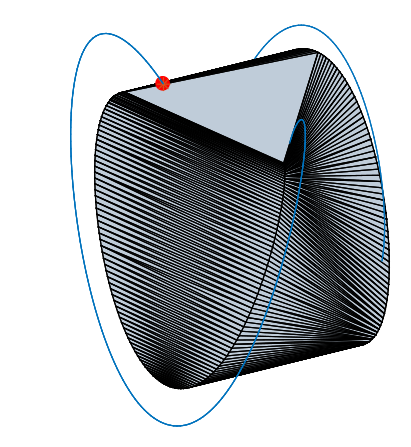}
 \caption{\label{fig:yellowgreen} The red boundary point 
 shows that the convex trajectory is not forward closed.}
    \end{center}
 \end{figure}


Trigonometric curves also arise from linear dynamical systems.
Here (\ref{eq:dynamics1}) takes the form
$\dot x = A x $, where $A$ is a real $n \times n$-matrix.
We tested our convex hull algorithms  on linear systems for $n=3,4$.
We sampled matrices $A$ with no real eigenvalues.
This ensures that the trajectories are bounded in $\RR^n$.
They can be written in terms of trigonometric functions.
It was shown in \cite{Kai} that every convex trajectory of a linear 
dynamical system is forward-closed. Thus, computing the convex 
trajectory is equivalent to computing the attainable region.

Consider the {\em generalized moment curve},
whose convex hull was studied in \cite[Theorem 1]{Smi}.
Let $z = (1,0,1,0)$ and consider the linear dynamical system given by 
$$ A \,\,= \,\,2 \pi \cdot  \begin{small} \begin{pmatrix}
\, 0 & -p & \phantom{-}0 & \phantom{-}0 \,\, \\
\, p & \phantom{-}0 & \phantom{-}0 & \phantom{-}0 \, \,\\
\, 0 & \phantom{-}0 & \phantom{-}0 & -q \,\, \\
\, 0 & \phantom{-}0 & \phantom{-}q & \phantom{-}0 \, \,\end{pmatrix}\end{small} ,$$
where $p$ and $q$ are relatively prime positive integers. The trajectory is
the curve
$$ x(t) \,\, = \,\,\bigl( \,{\rm cos}( 2\pi p t)\,,\, {\rm sin}(2 \pi p t)\,,\,
{\rm cos}( 2\pi q t)\,,\, {\rm sin}(2 \pi q t)\, \bigr).
$$
The curve is closed, and we can restrict to $0 \leq t < 1$.
The convex hull of the curve is a $4$-dimensional convex body. 
By \cite[Theorem 1]{Smi}, there are no $3$-dimensional faces.
Assuming $p,q \geq 3$,  there are two $1$-patches and two $2$-patches.
The explicit description in \cite{Smi} makes this a useful test case.

\begin{example}[$p=3, q = 4$] \rm
\label{ex:smilansky} The  segment ${\rm conv}\{ x(s), x(t) \}$ is an edge
if and only~if 
$$ \frac{1}{4} < |s-t| < \frac{1}{3} \qquad
{\rm or} \qquad \frac{2}{3} < |s-t| < \frac{3}{4}. $$
In addition to this surface of edges, there are two curves of $2$-faces, namely the triangles
$$ \begin{matrix} {\rm conv} \bigl\{
x\bigl(t \bigr), x \bigl(t + \frac{1}{3} \bigr), x\bigl(t+ \frac{2}{3} \bigr) \bigr\}
\qquad {\rm for} \,\,\, 0 \leq t < \frac{1}{3} \end{matrix} $$
and the squares
$$ \begin{matrix} {\rm conv} \bigl\{
x\bigl(t \bigr), x \bigl(t + \frac{1}{4} \bigr), x\bigl(t+ \frac{1}{2} \bigr) ,
x \bigl(t + \frac{3}{4} \bigr) \bigr\}
\qquad {\rm for} \,\,\, 0 \leq t < \frac{1}{4} .\end{matrix} $$
These are all the exposed faces of the convex trajectory.
Even though the curve is not simplicial,  Algorithm \ref{alg:boundary_examining_3} works well, and
 we verified 
 Smilansky's findings using our software.  
\end{example}

We experimented with our {\tt Bensolve}-based code
for random trigonometric curves  $\, x: [0,1] \rightarrow \RR^n $.
The coordinates of $x$ are trigonometric polynomials of the form
$$ x_j(t) \,\,=\,\,\sum_{k=1}^d A_{jk} \cdot {\rm cos}(2 \pi k t) \,+\,
\,\sum_{k=1}^d B_{jk} \cdot {\rm sin}(2 \pi k t) \,+\,
C_{j} \qquad {\rm for} \,\,j=1,\ldots,n.
$$
We write the coefficients as a pair of $n \times d$ matrices $A$ and $B$
together with a column vector $C$, all filled with real numbers.
For general matrices, the resulting curve is an algebraic curve of degree $2d$ in $\RR^n$.
We computed many examples and recorded the features seen in the boundary.
We were most interested in the maximal numbers of facets that were observed.

\begin{table}[h]
 \setcounter{MaxMatrixCols}{15}
 \begin{footnotesize}
$$\begin{matrix} 
{\rm degree} \ $2d$ & 6 & 8 & 10 & 12 & {\bf 14} & 16 & 18 & 20 & 22 & 24 & 26 & 28 \\
\hbox{max $\#_2$} & 6 & 10 & 16  &17 & {\bf 20} & 21 & 24 & 26 & 28 & 30 & 34 &  34 \\
{\rm tritangents} & 8 & 80 & 280 & 672 & {\bf 1320} 
& 2288 & 3640 & 5440 & 7752 & 10640 & 14168 & 18400 \\
\hbox{max $\#_1$} & 10 & 16 & 24 & 26 & {\bf 30} 
& 32 & 35 & 38 & 41 & 44 & 46 & 50 \\
{\rm edge \ surface} &
30 & 70 & 126 & 198 & {\bf 286} & 390 & 510 & 646 & 798 & 966 & 1150 & 1350 
 \end{matrix} \vspace{-0.12in}
$$
\end{footnotesize}
\vspace{0.5cm}
\caption{\label{tab:3Ddata} Census of random trigonometric curves in $3$-space}
\end{table}

We sampled random data $(A,B,C)$ and  computed the convex hull
of the resulting curves. For $n=3$ we recorded the number
of triangles (= $2$-patches). The second row in Table \ref{tab:3Ddata} shows the maximal
number of triangles that was observed for given degree $2d$.
Each triangle spans a real tritangent plane of the curve.
The third row lists the number of complex tritangent planes for this curve,
which is a generic space curve of genus $0$ of degree $2d$.
The edge surface of the curve is an irreducible ruled surface that defines the
nonlinear part of the boundary of the convex hull. 
 Its degree is listed in the fifth row. This surface is the Zariski closure~of any of the $1$-patches. The fourth row shows the maximal number of
observed $1$-patches. The numbers in the
third and fifth row are taken from \cite[Corollary 3.1]{RS2}.

\begin{figure}[h]
\begin{center}
 \includegraphics[width=0.5\textwidth]{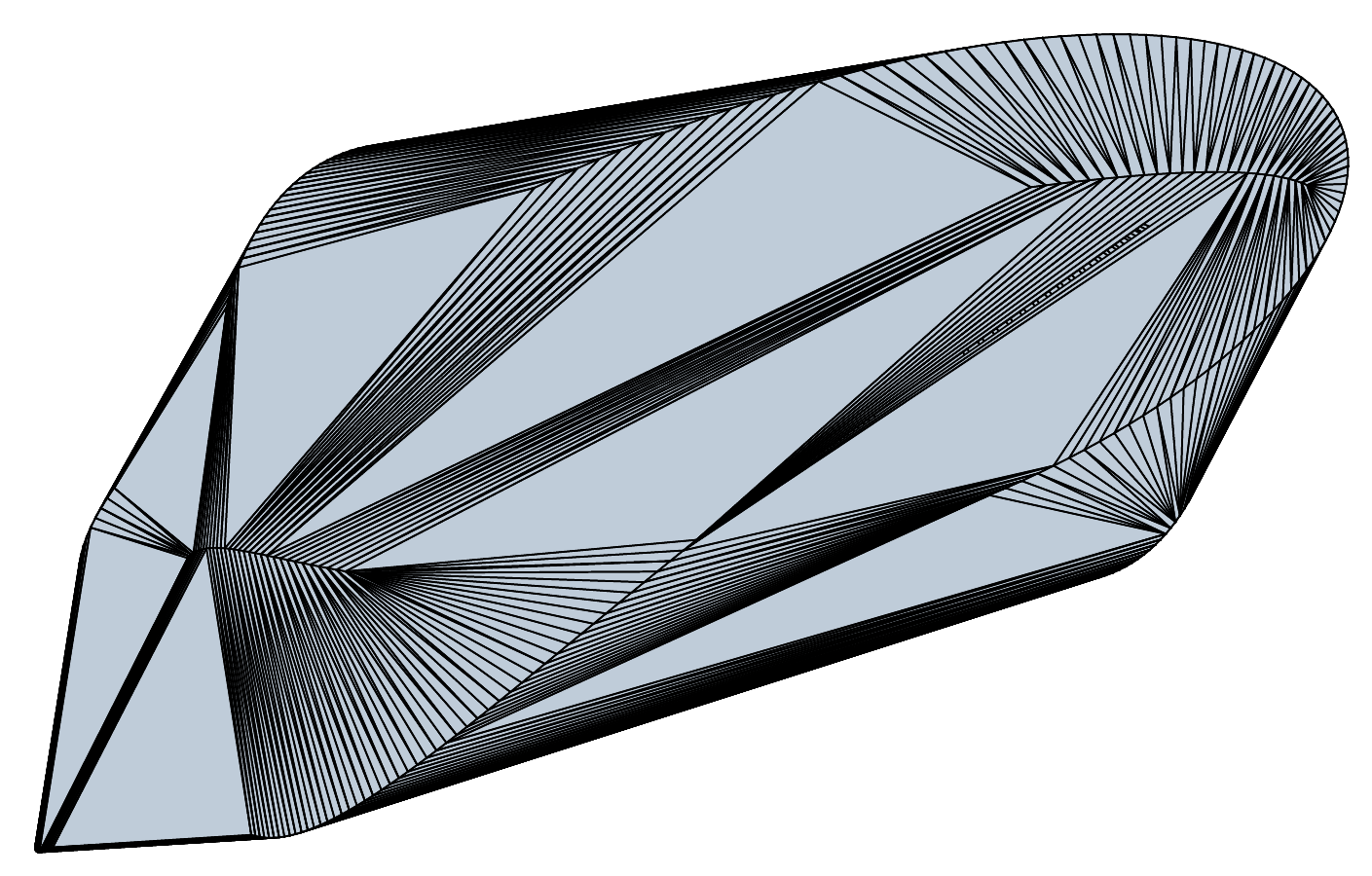}
 \caption{\label{fig:14} The convex hull of a trigonometric curve of degree $14$ in $3$-space.
 The boundary of this convex body consists 
  of triangles and of $1$-patches in a ruled surface of degree $286$.}
  \end{center}
 \end{figure}

We illustrate our computational results in Table \ref{tab:3Ddata}  for a curve
of degree $2d=14$.

\begin{example}[$2d=14$] \label{eq:winner} \rm
We consider the curve defined by
 the $3 \times 7$ matrices
$$ \qquad
A \,\, =\,\, \begin{footnotesize} \begin{pmatrix}
       \phantom{-}0.28561 & -0.024204 & -0.07664 &\, 0.43593 & \phantom{-}0.15244 & -0.24464 & \phantom{-}0.41538 \\
        -0.37439 & -0.30106 & \phantom{-}0.32118 & \,0.38410 &\phantom{-} 0.29990 & -0.14990 & -0.45481 \\
        -0.17997 & -0.16046 & -0.23522 &   \, 0.47912 & -0.08084 & \phantom{-}0.19628 & \phantom{-}0.46895 \\
\end{pmatrix} \end{footnotesize}
$$
$$ {\rm and} \quad
B \,\, =\,\, \begin{footnotesize}\begin{pmatrix}
      -0.39109 & \,0.06742 & -0.12451 & \,0.44073 & -0.20822 & -0.03646 & -0.01034 \\
  \phantom{-}0.48646 & \,0.38580 & -0.13216 & \,0.36184 & \phantom{-}0.30633 & -0.14131 &
  \phantom{-} 0.48650 \\
        -0.15326 &\, 0.32591 & \phantom{-}0.02569 & \,0.23351 & -0.34972 & \phantom{-}0.04772 & \phantom{-}0.42441 \\
\end{pmatrix} \end{footnotesize}, $$
along with the vector
$\, C \, =\, \begin{pmatrix}   0.39768 &   0.42346 &   0.23797 \end{pmatrix}^T$.
The convex hull of this curve has $20$ triangle facets. 
It is shown in Figure \ref{fig:14}. The planes that define the triangles
are tritangent planes.  The curve is generic
and has $1320$ tritangent planes over $\CC$. The nonlinear part of the
boundary is the edge surface \cite{RS2}. This is an irreducible
ruled surface of degree $286$. 
\end{example}

\section{Partitioning the Boundary}
\label{sec7}

Problem (i) from the Introduction was addressed in the previous sections.
In this section we propose a solution to Problem (ii). Our input now is the output
of Algorithm \ref{alg:boundary_examining_2} or Algorithm \ref{alg:boundary_examining_3}.
If $n=3$ then we are given all $2$-patches (triangles) and all $1$-patches
(in the edge surface) of the convex hull $C= {\rm conv}(\mathcal{C})$ of 
a trajectory $\mathcal{C}$ of the dynamical system
(\ref{eq:dynamics1}). 
For instance, the output of Algorithm \ref{alg:boundary_examining_3} might be
Figure~\ref{fig:31patches}. This is a variant of Figure~\ref{fig:14}, derived from a trigonometric
curve $\mathcal{C}$ of degree $14$. Here $\mathcal{C}$ is simplicial, and the boundary $\partial C$
consists of $20$ triangles (= $2$-patches) and $30$ $1$-patches, shown in different colors in
Figure~\ref{fig:31patches}.

\begin{figure}[h]
\begin{center}
 \includegraphics[width=0.35\textwidth]{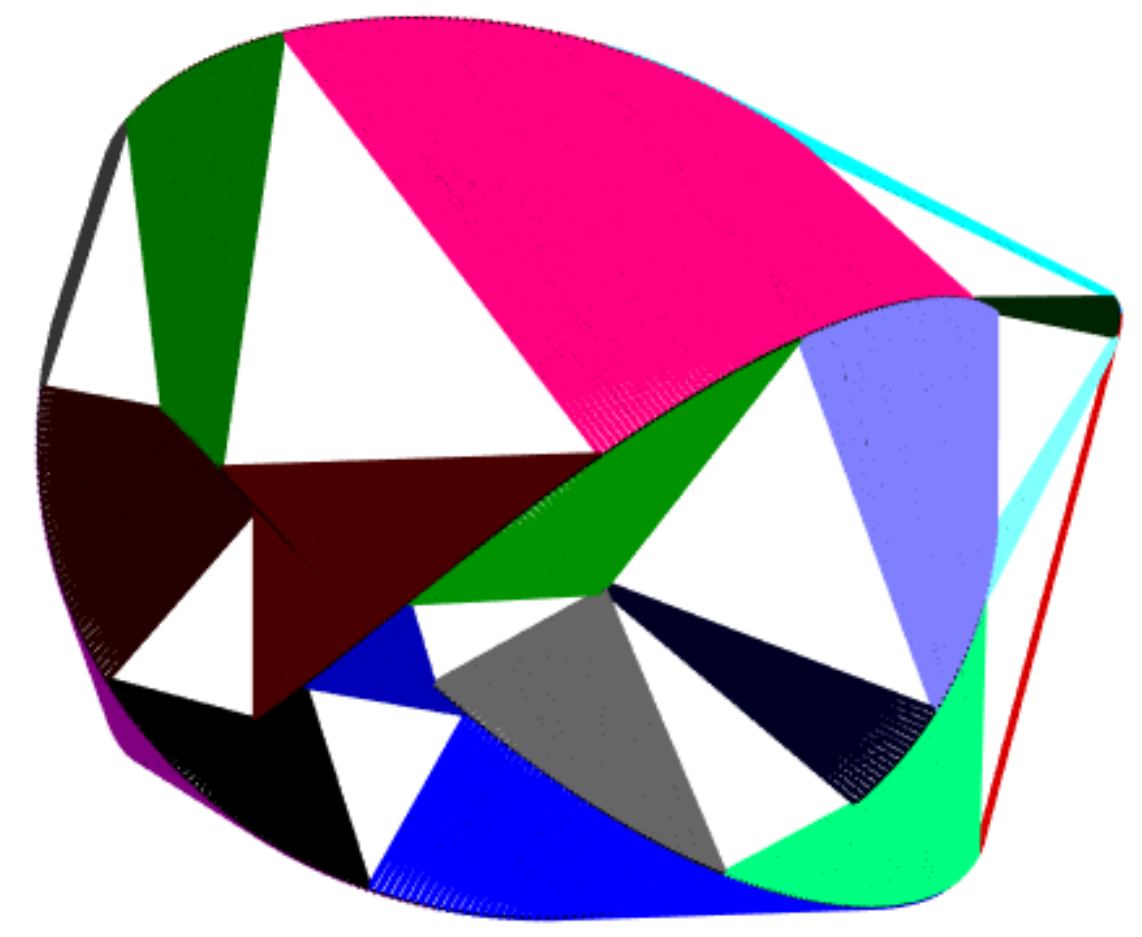}
 \caption{\label{fig:31patches} The convex hull of a trigonometric curve of degree $14$.}
  \end{center}
 \end{figure}

We seek to partition the boundary $\partial C$ into two regions. In one region, the vector
$\phi(z)$ points inward and in the other it points outward, as in Figure~\ref{fig:yellowgreen}.
 For $n=2$ this partition is determined by  the formulas (\ref{eq:part2a}) and (\ref{eq:part2b}).
In what follows we generalize this method to $n \geq 3$. 

For a pair $(u,v)$ in the normal cycle $N(C)$, $v$ is an outward pointing unit normal vector at $C$.
 The right hand side of (\ref{eq:dynamics1}) {\em points inward} at $u \in \partial C$
if $ \phi(u) \cdot v \leq 0$ for all $v \in \pi_2(\pi_1^{-1}(u))$. It {\em points outward} otherwise. 
Taking into account that the set $\{ u \in \RR^n \mid \pi_2(\pi_1^{-1}(u))= \{v\}\}$ is dense in $\partial C$, we see that the boundary between
inward and outward pointing vectors $\phi(z)$ is the image under the projection
$\pi_1: N(C) \rightarrow \partial C$ of the set
$ \big\{\, (u,v) \in N(C) \,\, :\,\, \phi(u) \cdot v \,= \, 0 \,\bigr\}$.
This image has dimension $n-2$ inside the $(n-1)$-dimensional normal cycle $N(C)$.

Let $\psi$ be a $k$-patch of $C$. The output of Algorithm \ref{alg:boundary_examining_3} represents $\psi$ by a connected graph $G_i$. Each node of $G_i$
is a face $F=\conv \{u_0,\ldots,u_{k}\}$ along with a normal vector $v$ at $C$.
 We are interested in the restriction of the boundary above to the patch $\psi$ of interest:
\begin{equation}
\label{eq:uvpatch}
 \pi_1\left(\big\{\, (u,v) \in \psi \,\, :\,\, \phi(u) \cdot v \,= \, 0 \,\bigr\}\right). 
 \end{equation}
We obtain an approximate representation of $\partial C$ employing Algorithm \ref{alg:boundary_examining_3}.
Algorithm \ref{alg:redgreen} computes a partition of this approximation into inward and outward pointing regions. The simplex $\Delta_k$
is the convex hull of the unit vectors  in $\mathbb{R}^{k}$.

Consider a fixed graph $G_i$ in the loop started in step 1. The value of $k$ is 
constant in step 2. This is ensured by step 10 in Algorithm \ref{alg:boundary_examining_3}.
The graph $G_i$ represents a $k$-patch of $C$. We algorithmically realize the restriction of the hypersurface (\ref{eq:uvpatch}) to the $k$-faces in that patch in step 4.
If $k=1$ then this results in a finite partition of a line segment.
For $k=2$ we obtain a curve in a triangle, and for $k=3$ we obtain
a surface in a tetrahedron. The latter case happens only for $n \geq 4$.

The paradigm for our computations is the algebraic case.
Suppose that $\mathcal{C}$ is an algebraic curve, for instance
obtained from a dynamical system as in Proposition \ref{prop:algsystem}.
In that case, the equation $\phi(u) \cdot v = 0$ is a polynomial in  $k$ 
unknowns $\lambda_1,\ldots,\lambda_k$, after setting $\lambda_0 = 1-\sum_{j=1}^k \lambda_j$.
To be precise, let $ v= (v_1,\ldots,v_n)$ be the normal vector
of the $k$-face in question. In the situation of Proposition \ref{prop:algsystem}, the
polynomial equation we are solving on $\Delta_k$ takes the form
$$ \phi(u) \cdot v \,\,=\,\,
 \sum_{l=1}^n  J_l \bigl( \lambda_0 u_0 + \cdots + \lambda_k u_k \bigr) \cdot v_l \,\,\, = \,\,\, 0 . $$
In some situations, we know the equation $f=0 $ of the hypersurface $\pi_1(\psi)$ in $\RR^n$.
Here $f$ is analytic or polynomial, depending on the instance. With this, we~write
$$ 
v_l \,\, = \,\, \frac{\partial{f}}{\partial x_l} \bigl( \lambda_0 u_0 + \cdots + \lambda_k u_k \bigr)
\qquad {\rm for} \,\,\, l = 1,2,\ldots,n . $$
This formula allows us to solve the equation  $\phi(u) \cdot v = 0$ 
simultaneously on the entire and exact $k$-patch, and not just on each approximated $k$-face of $\psi$
individually, as it is done in step 4 of Algorithm \ref{alg:redgreen}.

\begin{algorithm_thm}{(Partitioning the boundary of a convex trajectory)}
\label{alg:redgreen}

\begin{algorithm}[H]
\SetKwInOut{Input}{input}\SetKwInOut{Output}{output}
\Input{The graphs $G_i$ representing the patches of a convex trajectory of (\ref{eq:dynamics1})}
\Output{Partition of the boundary into inward and outward pointing regions}
\ForEach{connected graph $G_i$ in the output of Algorithm \ref{alg:boundary_examining_3}}{
\ForEach{node $(\{u_0,\ldots,u_{k}\},v)$ of the graph $\,G_i\,$}{
Set $u = \sum_{i=0}^{k} \lambda_j u_j$ where $\lambda_j $ are nonnegative unknowns
satisfying $\sum_{j=0}^{k} \lambda_j = 1 $. \\
Compute the $(k-1)$-dimensional hypersurface in $\Delta_{k}$ defined by $\phi(u) \cdot v = 0$ and identify inward and outward pointing regions.
}
}
\end{algorithm}
\end{algorithm_thm}


\begin{example}[$n=3$] \rm
We partition the boundary of the convex body in Figure~\ref{fig:yellowgreen}.
Its edge surface has two irreducible components, of degrees $3$ and $16$.
Each contributes two patches.
The cubic is $f_2 = z - 4x^3 + 3x$ in (\ref{eq:isthezeroset}).
The degree $16$ polynomial $g$ is displayed in \cite[\S 1]{RS2}.
On the cubic patches, the equation $\nabla f_2 \cdot \phi = 0$ holds
identically, so these patches are not partitioned. Hence,
every trajectory that starts on a cubic patch remains in that patch.
 The two degree $16$ patches are partitioned by a curve of degree $262$,
 obtained by intersecting the patches with the surface defined by
 $ \nabla g \cdot \phi = 0$. The two triangle facets lie in the planes
 $z  =\pm 1$. They are partitioned by the lines $y=0$ and $x=\pm 1/2$.
 Figure~\ref{fig:yellowgreen} shows the trajectory that starts at a
 red point in the outward pointing region of the top triangle.
 \end{example}

In the next section we apply our methods  to partition the boundary of
convex trajectories of dynamical systems that arise from chemical reaction networks.
Figure~\ref{fig:weakrev}  shows our partition for a convex trajectory arising in an
application.

\section{Chemical Reaction Networks}
\label{sec8}

Our interest in convex trajectories and attainable regions is motivated
by dynamical systems  for chemical reaction networks. 
The aim of {\em attainable region theory} \cite{MGHGM} is to design
  chemical reactors that are optimal for chemical reactions of interest.
This research topic was pioneered by Feinberg and Hildebrand in \cite{FH}.
They argued that 
optimal reactors are often 
 found at the extreme points of the attainable region and showed that these extreme points are realizable
  by parallel operations of elementary reactor types.
We refer to  \cite{Kai} for a recent study in the setting of convex algebraic geometry \cite{BPT, RS2}.
The notion of {\em protrusions} in \cite[\S 2.6]{FH} is
dual to our notion of {\em patches} in Section \ref{sec5}, in the sense that a $k$-patch on $C$
corresponds to an $(n-k-1)$-dimensional protrusion on $C^\vee$ under the
self-duality of~the normal cycle $N(C)$ in (\ref{eq:makeidentification}).
Patches and protrusions are interesting  for further research.

To explain the connection to chemical reactions, we
work in the setting of {\em mass action kinetics}.
Let $G$ be a directed graph with $m$ vertices, each labeled by a 
monomial $x^{{\bf a}_i}$ in $n$ unknowns 
$x = (x_1,\ldots,x_n)$. These unknowns are the concentrations of
$n$ chemical species. The $m$ monomials are the chemical complexes.
Each $x_j = x_j(t)$ is a function of time $t$.
With each edge $(i,j)$ of $G$, connecting two
 monomials $x^{{\bf a}_i}$ and $x^{{\bf a}_j}$, we associate a parameter $\kappa_{ij}$, which is
the rate constant for that reaction. The associated dynamical system is given~by
\begin{equation}
\label{eq:CRN}
\phi(x) \,=\, \bigl(x^{{\bf a}_1}, x^{{\bf a}_2},\ldots,x^{{\bf a}_m}\bigr)
\cdot \Lambda_G(\kappa) \cdot \bigl( {\bf a}_1,{\bf a}_2,\ldots,{\bf a}_m \bigr)^T. 
\end{equation}
This is a row vector of length $n$, written as a product of three matrices, of formats
$1 \times m$, $\, m \times m$, and $m \times n$. The middle matrix $\Lambda_G(\kappa)$ is
the {\em Laplacian} of the graph $G$, with entry $\kappa_{ij}$ for each edge
$(i,j)$, all other off-diagonal entries set to zero, and diagonal entries inferred so that 
the row sums of $\Lambda_G(\kappa)$  are zero. An important special case of
(\ref{eq:CRN}) are the toric dynamical systems  \cite{CDSS}. 
For details see the forthcoming book by Dickenstein 
and Feliu~\cite{DF} and Shiu's dissertation \cite[\S 1.3]{Shiu}.

A feature of many chemical reaction systems (\ref{eq:CRN}) is the existence of
{\em conservation relations}. These arise when the entries of the 
polynomial vector $\phi(x)$ are linearly dependent over $\RR$.
If this happens then all trajectories lie in certain lower-dimensional
subspaces of $\RR^n$. In such cases, the ambient dimension $n$ can be reduced.
Namely, we always transform our dynamical systems so that each trajectory affinely spans
$\RR^n$. We examined some important classes with $n \leq 4$.
Of particular interest are chemical reaction networks that admit multistationarity.
The smallest~such networks were characterized by Joshi and Shiu~\cite{JS}.

Given an arbitrary polynomial dynamical system (\ref{eq:dynamics1}), it is natural
to ask whether it arises from some chemical reaction network $G$. The solution to
this inverse problem was given by H$\acute{\text{a}}$rs  and T$\acute{\text{o}}$th \cite{HT}.
They showed that $\phi = (\phi_1,\ldots,\phi_n)$ is realized by
a graph $G$ as above if and only if each monomial  with negative coefficient in $\phi_i$ is divisible by $x_i$, 
for $i=1,2,\ldots,n$.

We discussed Hamiltonian systems in Section \ref{sec2}.
The following result characterizes chemical reaction 
dynamics in $\RR^2$ that  is Hamiltonian. It would be interesting to
study such reaction networks, along with the higher-dimensional 
versions arising from Proposition \ref{prop:algsystem}.

\begin{proposition} \label{prop:hamiltonianCRN}
Let $h(x,y)$ be a polynomial. The Hamiltonian system
(\ref{eq:hamiltonian}) can~be realized as a mass action system (\ref{eq:CRN}) 
if and only if  the coefficients of all powers of $y$ in $h(x,y)$ are
 nonnegative and those of all pure powers of $x$ are nonpositive,~i.e.
 $$ \qquad h(x,y) \,\, = \,\, xy \cdot a(x,y) \, - \, b(x) \,+\, c(y), $$
{where $b$ and $c$ have nonnegative coefficients}. 
 \end{proposition}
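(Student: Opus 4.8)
The plan is to reduce the statement directly to the Hárs–Tóth realizability criterion recalled just above: a polynomial vector field $\phi = (\phi_1,\ldots,\phi_n)$ arises as a mass action system of the form (\ref{eq:CRN}) if and only if, for each $i$, every monomial appearing with a negative coefficient in $\phi_i$ is divisible by $x_i$. For the Hamiltonian system (\ref{eq:hamiltonian}) we have $n=2$, $x_1 = x$, $x_2 = y$, and
\[
\phi_1 \,=\, \frac{\partial h}{\partial y}, \qquad \phi_2 \,=\, -\,\frac{\partial h}{\partial x}.
\]
So the whole proof comes down to translating the two divisibility conditions, on $\partial h/\partial y$ and on $-\partial h/\partial x$, into conditions on the coefficients of $h$.

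Write $h(x,y) = \sum_{i,j \geq 0} c_{ij}\, x^i y^j$. First I would observe that a monomial of $\partial h/\partial y = \sum_{i,j} j\, c_{ij}\, x^i y^{j-1}$ fails to be divisible by $x_1 = x$ precisely when it is a pure power of $y$, i.e.\ when it comes from a term $c_{0j}\, y^j$ with $j \geq 1$; moreover differentiation in $y$ multiplies the coefficient by the positive integer $j$, hence preserves its sign. Therefore the Hárs–Tóth condition for $\phi_1$ holds if and only if $c_{0j} \geq 0$ for all $j \geq 1$, i.e.\ all coefficients of pure powers of $y$ in $h$ are nonnegative. Symmetrically, a monomial of $-\partial h/\partial x = -\sum_{i,j} i\, c_{ij}\, x^{i-1} y^j$ fails to be divisible by $x_2 = y$ exactly when it is a pure power of $x$, coming from $c_{i0}\, x^i$ with $i \geq 1$, where its coefficient is $-i\, c_{i0}$; so the condition for $\phi_2$ holds if and only if $c_{i0} \leq 0$ for all $i \geq 1$, i.e.\ all coefficients of pure powers of $x$ in $h$ are nonpositive.

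Finally I would repackage these two coefficient conditions as the asserted normal form. Split $h$ into its mixed part, its pure-$x$ part, and its pure-$y$ part. Every monomial $x^i y^j$ with $i \geq 1$ and $j \geq 1$ is divisible by $xy$, so the mixed part equals $xy \cdot a(x,y)$ for a (completely unconstrained) polynomial $a$; by the previous paragraph the pure-$x$ part has all coefficients $\leq 0$, so it equals $-b(x)$ with $b$ a univariate polynomial with nonnegative coefficients; and the pure-$y$ part, together with the constant term $c_{00}$, equals $c(y)$ with $c$ having nonnegative coefficients. This yields $h = xy\, a(x,y) - b(x) + c(y)$, and conversely any $h$ of this shape manifestly satisfies the two coefficient conditions, hence by Hárs–Tóth the associated Hamiltonian system is realizable by some network $G$.

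The computation is entirely elementary; the only points requiring care are bookkeeping ones — identifying exactly which monomials of a partial derivative are or are not divisible by the corresponding variable, checking that differentiation never flips a sign because it scales by a positive integer, and noting that $c_{00}$ appears in neither partial derivative and is therefore unconstrained (so it can be absorbed into $c(y)$ or into $-b(x)$ at will, which is why the normal form is not unique). I do not expect any genuine obstacle beyond getting this bookkeeping right.
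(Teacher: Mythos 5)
Your proposal is correct and follows exactly the route the paper takes: the paper's proof is the single sentence ``This is immediate from Theorem 3.2 in \cite{HT},'' and your argument simply spells out the coefficient bookkeeping that makes the Hárs--Tóth divisibility criterion for $\partial h/\partial y$ and $-\partial h/\partial x$ equivalent to the stated normal form. The details you supply (signs preserved under differentiation, the unconstrained constant term) are all accurate.
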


\begin{proof}
This is immediate from Theorem 3.2 in \cite{HT}.
\end{proof}

The convex trajectory we compute as an answer to problem (i) in the Introduction is a first approximation to the attainable region
  and its representation in  \cite{FH}. In Section \ref{sec7} we presented  an algorithm for partitioning the approximated boundary of
the convex trajectory. We next apply that algorithm to two interesting chemical reaction networks. 

\begin{example}[$n=4, m=5$] \label{ex:vdv} \rm
We revisit the {\em Van de Vusse reaction}. This is~studied extensively in the chemistry literature (cf.~\cite[Chapter~6]{MGHGM}). 
The network equals
\begin{center}
\schemestart 
    \subscheme{$X_2$}
    \arrow(be--ac){<-[1]}[180]
    \subscheme{$X_1$}
      \arrow(@be--d){->[1]}
    \subscheme{$X_3$}
\schemestop
\end{center}
\begin{center}
\schemestart 
    \subscheme{$X_4$ \, .}
    \arrow(be--ac){<-[10]}[180]
    \subscheme{2$X_1$}
\schemestop
\end{center}
The rate constants $\kappa_{ij}$ are written over the edges.
 The mass action system  (\ref{eq:CRN})~equals
\begin{equation}\label{eqn:ds}
\phi(x) \,\,=\,\,
\begin{bmatrix}
 x_{1} & x_2 & x_{3} & x_1^2 & x_4
 \end{bmatrix}\cdot \begin{small}  \begin{bmatrix}
-1	 &  1  &  0 &  0  &  0\\
 0	 & -1  &  1 &  0  &  0\\
 0	 &  0  &  0 &  0  &  0\\
 0	 &  0  &  0 & -10  & 10\\
 0  &  0  &  0 & 0 & 0
 \end{bmatrix}\cdot \begin{bmatrix}
1&0&0&0 \\
0&1&0&0\\
0&0&1&0\\
2&0&0&0\\
0&0&0&1 \end{bmatrix}, \end{small}
\end{equation}
where $x_i$ is the concentration of species $X_i$.
Explicitly, this is the system (\ref{eq:dynamics1}) with
$$ \phi(x_1,x_2,x_3,x_4) \,\,\, = \,\,\,
\bigl[\,-x_1 - 20 x_1^2\,,\,x_1 - x_2\,, \,x_2 \,, \,10 x_1^2\, \bigr]. $$
Computations of the critical reactors of this system are found in
\cite[Section 5.3]{MGHGM}.

Fix the starting point $y = (1,0,0,0)$. The trajectory starting at $y$ converges
to the steady state $y^* = (0,0,0.1522, 0.4238)$.
 The dynamics takes place in $\RR^4$, but the stoichiometry space has dimension $3$.
 In our analysis we use the projection onto the first three coordinates. 
 With this, the trajectory is an arc in a $3$-dimensional space, shown in blue in Figure~\ref{fig:weakrev}.

\begin{figure}[h]
\begin{center}
 \includegraphics[width=0.29\textwidth]{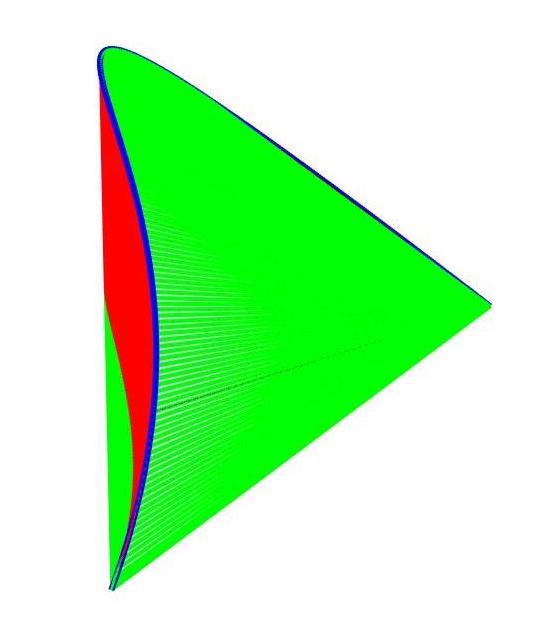} \qquad 
 \includegraphics[width=.5\linewidth]{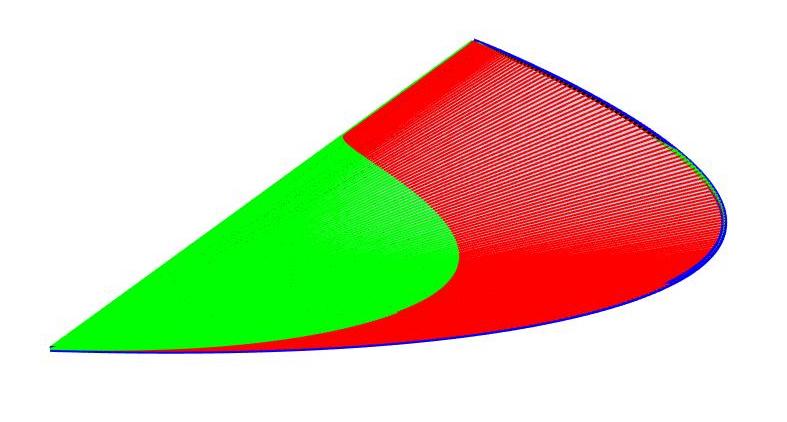}
 \caption{\label{fig:weakrev} Convex trajectory of the Van de Vusse reaction and the partition of its boundary.}
   \end{center}
 \end{figure}
 
We computed the convex trajectory starting at $y$
for (\ref{eqn:ds}) using Algorithm \ref{alg:boundary_examining_3},
and we then partitioned its boundary using Algorithm \ref{alg:redgreen}.
The result is shown in Figure \ref{fig:weakrev}.
The convex body has two $1$-patches, obtained by joining each of the two endpoints 
with each point on the curve. One of the patches is entirely green. This means that
the vector field is pointing inward on that patch. The other patch is partitioned into a 
green region and a red region, as shown on the right in Figure~\ref{fig:weakrev}.
Red color indicates that the vector field points outward. In particular, the convex trajectory is
strictly contained in the attainable region.
\end{example}

The mass action system (\ref{eq:CRN}) is called {\em weakly reversible} if every connected
component of the underlying directed graph is strongly connected in $G$, i.e.~there is a directed path
from any node in the component to any other node.
It was conjectured in \cite{Kai} that convex trajectories of weakly reversible systems
are forward closed. We here resolve that conjecture.

\begin{proposition}
Not all convex trajectories of weakly reversible systems are forward closed. 
\end{proposition}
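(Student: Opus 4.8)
The plan is to disprove the conjecture from \cite{Kai} by exhibiting a single explicit counterexample: a weakly reversible mass action system whose convex trajectory, for a suitable starting point, fails to be forward closed. By the criterion in Problem (ii) of the Introduction, it suffices to produce one boundary point $z$ of the convex trajectory at which the vector field $\phi(z)$ points strictly outward, i.e.\ $\phi(z) \cdot v > 0$ for some outer normal $v$ at $z$. In practice this means finding a bitangent edge (or more generally a $k$-face) of $C = \mathrm{convtraj}(y)$ spanned by two curve points, and verifying that along part of that face the determinantal/inner-product expression analogous to \eqref{eq:part2b} has a sign forcing an outward-pointing direction.

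First I would pick the smallest promising candidate. Toric dynamical systems \cite{CDSS} are automatically weakly reversible (every linkage class of the graph is strongly connected), so I would look among low-dimensional toric systems --- say $n = 2$ after reducing out conservation relations, or a small network with $n=2$ effective dimension. The concrete task is: choose a directed graph $G$ with complexes $x^{{\bf a}_i}$ and rate constants $\kappa_{ij}$ so that \eqref{eq:CRN} yields a planar polynomial vector field $\phi$ whose trajectories are bounded (so that the convex trajectory is a genuine convex body) but visibly non-convex (so that $\partial C$ has at least one edge). A spiraling or oscillatory trajectory around a stable focus, or a trajectory with an inflection, will do. Then I would compute, numerically as in Section~\ref{sec2}, the convex hull of a sampled approximation, extract the bitangent edge $\mathrm{conv}(x(t_i), x(t_j))$, and evaluate $f(\lambda) = \det\bigl(\phi(z), x(t_i) - x(t_j)\bigr)$ on $(0,1)$; a sign change, or a definite positive sign, on part of the edge certifies that $\phi$ points outward there.

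The write-up would then present the chosen network explicitly --- the graph, the Laplacian $\Lambda_G(\kappa)$, the complexes, and the resulting $\phi$ --- together with the starting point $y$, and state the one boundary point (or short arc of boundary points) where outward pointing occurs, giving the numerical value of $f(\lambda)$ or of $\phi(z)\cdot v$ there. For rigor one can additionally verify the sign claim symbolically: since $\phi$ is polynomial and the edge endpoints are algebraic (tangency conditions are polynomial in $t$), $f(\lambda)$ is a polynomial with algebraic coefficients, and its sign on a subinterval of $(0,1)$ can be certified exactly. This upgrades the numerically-found counterexample to a proof.

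The main obstacle is the search itself rather than any single deduction: one must land on a weakly reversible network that is simultaneously (a) effectively low-dimensional after removing conservation relations, (b) has bounded but non-convex trajectories, and (c) has a boundary edge on which the vector field genuinely exits. Weak reversibility tends to push dynamics toward complex-balanced equilibria with strong convergence behavior, which is precisely why the conjecture looked plausible; the delicate point is to exploit the transient, pre-equilibrium part of a spiraling trajectory, where the curve wraps around and a chord of the convex hull can be crossed outward by $\phi$ before the trajectory settles down. Once such an example is in hand, the verification via the sign of the determinant \eqref{eq:part2b} is routine, and the counterexample is reported in Section~\ref{sec8} (with an accompanying figure, analogous to Figure~\ref{fig:weakrev}), thereby resolving \cite[Conjecture 4.1]{Kai} in the negative.
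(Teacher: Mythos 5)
Your strategy is exactly the one the paper uses: disprove the conjecture by exhibiting one explicit weakly reversible (indeed toric) mass action system, one starting point $y$, and one boundary face of ${\rm convtraj}(y)$ on which $\phi$ points outward, certified by the sign of $\phi(z)\cdot v$ on that face. But for a statement of this kind the counterexample \emph{is} the proof, and your proposal stops exactly where the proof has to begin: you describe how one would search for and verify such a network, you correctly identify the obstacles (weak reversibility forces complex-balanced-type convergence, so one must exploit the transient part of the trajectory), but you never produce the graph, the rate constants, the vector field, or the boundary point. As written, this is a research plan rather than a proof; nothing in it establishes that the search terminates successfully. The paper supplies the missing object: the network $2X_1+X_2 \rightleftharpoons X_1+X_3 \rightleftharpoons 2X_2+X_3 \rightleftharpoons X_1+X_2$ with explicit rate constants, the starting point $y=(4,4,2)$, and a computed red region on a $1$-patch of the convex trajectory where the field exits.

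A secondary concern is your choice of where to search. You propose to look for a planar example ($n=2$ after reducing conservation relations) with a spiraling trajectory and a single bitangent edge tested via the determinant \eqref{eq:part2b}. The paper's counterexample is genuinely three-dimensional, with no conservation relations, and the outward-pointing set sits on a $1$-patch (a surface of edges) rather than on a single planar bitangent chord; it is not at all evident that a planar weakly reversible counterexample exists, since for toric systems the Lyapunov function severely constrains how a bounded planar trajectory can wrap around its own convex hull. So even granting the plan, restricting to $n=2$ risks an unsuccessful search, and the verification step would then have to be carried out with the $n\geq 3$ machinery of Algorithms \ref{alg:boundary_examining_3} and \ref{alg:redgreen} rather than the planar recipe you sketch. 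Your remark that the numerical sign claim can in principle be certified symbolically is a fair point, and applies equally to the paper's computation, which is presented as a numerical verification.
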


The proof is by  computation using our algorithms.
Here is the counterexample:

\begin{example}[Weakly Reversible System]\rm
Consider the following weakly reversible network
\begin{center}

\schemestart 
    \subscheme{2$X_1$ + $X_2$}
    \arrow(be--ac){<=>[2][2]}[180]
    \subscheme{$X_1$ + $X_3$}
      \arrow(@be--d){<=>[4][4]}
    \subscheme{2$X_2$ + $X_3$}
    \arrow(@d--f){<=>[2][4]}
    \subscheme{$X_1$ + $X_2.$}
\schemestop
\end{center}
The three coordinates for (\ref{eq:dynamics1}) are explicitly given by
$$
\begin{matrix}
\phi_1 & = & -10x_1^2x_2 + 10x_2^2x_3 - 4x_1x_2 + 2x_1x_3, \\
\phi_2 & = & 2x_1^2x_2 - 6x_2^2x_3 + 4x_1x_2 + 2x_1x_3, \\
\phi_3 & = & 6x_1^2x_2 - 6x_2^2x_3 + 4x_1x_2 - 2x_1x_3.
\end{matrix}
$$
This system has deficiency zero, and it is a toric dynamical system \cite{CDSS}.
There are no conservation relations. The trajectories are curves that span the ambient space~$\RR^3$.

\begin{figure}[h]
\begin{center}
 \includegraphics[width=0.47\textwidth]{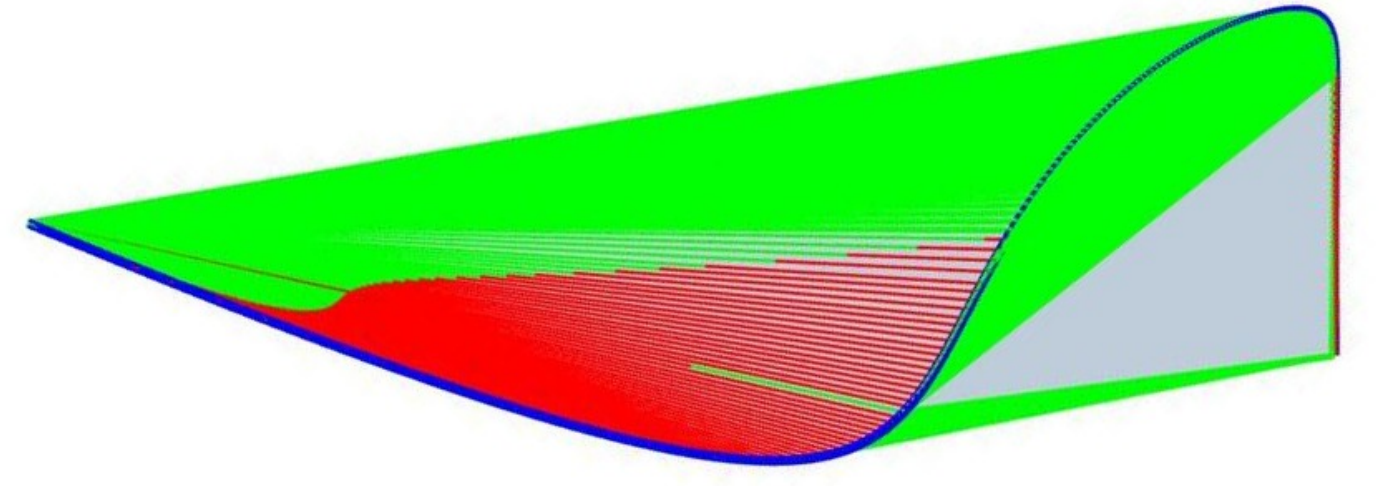}
 \caption{\label{fig:weakrev2} Convex trajectory of a weakly reversible system that is not forward closed.}
  \end{center}
 \end{figure}

Let $y=(4,4,2)$. The convex body $C={\rm convtraj}(y)$ 
was computed using Algorithm \ref{alg:boundary_examining_3}
and  is shown in Figure~\ref{fig:weakrev2}.
The triangle shown in gray is a $2$-patch of $C$.
The vector field given by $(\phi_1,\phi_2,\phi_3)$ points inward at all points on 
that triangle facet. 
We also show the partition of the $1$-patches of $C$, as computed by Algorithm~\ref{alg:redgreen}.
One of the patches is partitioned into a green region and a red region.
As before, the vector field points outward at each red point.
We conclude that the convex trajectory $C$ of $y$ is not forward closed.
\end{example}

\bigskip

\noindent
{\bf Acknowledgments.}
Nidhi Kaihnsa was funded by the
International Max Planck Research School {\em Mathematics in the Sciences} (IMPRS). 
 Bernd Sturmfels was partially supported by the
 US National Science Foundation (DMS-1419018). 
 
 \bigskip \bigskip

\begin{small}

\end{small}
\bigskip \bigskip

\noindent
\footnotesize {\bf Authors' addresses:}

\smallskip

\noindent Daniel Ciripoi, Universit\"at Jena
\hfill {\tt daniel.ciripoi@uni-jena.de}

\noindent Nidhi Kaihnsa, MPI-MiS Leipzig
\hfill {\tt kaihnsa@mis.mpg.de}

\noindent Andreas L\"ohne, Universit\"at Jena
\hfill {\tt andreas.loehne@uni-jena.de}

\noindent Bernd Sturmfels,
 \  MPI-MiS Leipzig and
UC  Berkeley \hfill  {\tt bernd@mis.mpg.de}

\end{document}